\newcounter{commentcounter}
\renewcommand*{\backref}[1]{}
\renewcommand*{\backrefalt}[4]
{
    \ifcase #1
        No citation in the text.
    \or
        Cited on Page #2.
    \else
        Cited on Pages #2.
    \fi
}
\newtheorem{thm}{Theorem}[section]
\newtheorem{lemma}[thm]{Lemma}
\newtheorem{prop}[thm]{Proposition}
\newtheorem{conjecture}[thm]{Conjecture}
\newtheorem{claim}[thm]{Claim}
\newtheorem{question}[thm]{Question}
\newtheorem{thmx}{Theorem}
\newtheorem{corx}[thmx]{Corollary}
\theoremstyle{definition}
\theoremstyle{definition}
\newtheorem{remark}[thm]{Remark}
\theoremstyle{plain}
    \newtheoremstyle{TheoremNum}
        {8.0pt plus 2.0pt minus 4.0pt}{8.0pt plus 2.0pt minus 4.0pt} 
        {\itshape} 
        {-0.15cm} 
        {\bfseries} 
        {.} 
        { }  
        {\thmname{#1}\thmnote{ \bfseries #3}}
    \theoremstyle{TheoremNum}
    \newtheorem{duplicate}{}
\newcommand*{\claimproofname}{My proof}
\newenvironment{claimproof}[1][\claimproofname]{\begin{proof}[#1]}{\end{proof}}
\DeclareMathOperator{\aster}{\text{\LARGE{\textasteriskcentered}}}
\DeclareMathOperator{\Aut}{\mathrm{Aut}}
\DeclareMathOperator{\Comm}{\mathrm{Comm}}
\DeclareMathOperator{\Isom}{\mathrm{Isom}}
\newcommand{\calg}{{\mathcal{G}}}
\newcommand{\calt}{{\mathcal{T}}}
\newcommand{\calv}{{\mathcal{V}}}
\newcommand{\res}{{\rm res}}
\newcommand{\GL}{\mathrm{GL}}
\newcommand{\PSL}{\mathrm{PSL}}
\newcommand{\SU}{\mathrm{SU}}
\newcommand{\OO}{\mathrm{O}}
\newcommand{\SO}{\mathrm{SO}}
\newcommand{\LM}{\mathrm{LM}}
\newcommand{\CAT}{\mathrm{CAT}}
\newcommand{\onto}{\twoheadrightarrow}
\DeclareMathOperator{\Lk}{\mathrm{Lk}}
\newcommand{\EE}{\mathbb{E}}
\newcommand{\cd}{\mathrm{cd}}
\def\Z{\mathbb{Z}}
\newcommand{\ZZ}{\mathbb{Z}}
\newcommand{\RR}{\mathbb{R}}
\newcommand{\QQ}{\mathbb{Q}}
\tikzstyle{blackNode}=[fill=black, draw=black, shape=circle]
\title{Irreducible lattices fibring over the circle}
\author{Sam Hughes}
\address{Sam Hughes\\ Universit\"at Bonn, Mathematical Institute, Endenicher Allee 60, 53115 Bonn, Germany}
\email{sam.hughes.maths@gmail.com} \email{hughes@uni-bonn.de}
\date{\today}
\subjclass{20F67, 20J05, 20J06 (primary), 20F65, 57M07, 57M60 (secondary)}
\begin{document}

\begin{abstract}
We investigate the Bieri--Neumann--Strebel--Renz (BNSR) invariants of irreducible uniform lattices. In the case of a direct product of a tree and a Euclidean space we show that vanishing of the BNSR invariants for all finite-index subgroups of a given uniform lattice is equivalent to irreducibility.  On the other hand we construct irreducible uniform lattices which admit maps to the integers whose kernels' finiteness properties are determined by the finiteness properties of certain Bestvina--Brady groups.
\end{abstract}

\maketitle

\section{Introduction}
Let $H$ be a locally compact group with Haar measure $\mu$. A \emph{lattice} $\Gamma$ in $H$ is a discrete subgroup such that $H/\Gamma$ has finite measure.  We say $\Gamma$ is \emph{uniform} if $H/\Gamma$ is compact.  Roughly speaking, a lattice $\Gamma$ in a product $G\times H$ is \emph{irreducible} if the projections of $\Gamma$ to $G$ and $H$ are non-discrete and $\Gamma$ does not virtually split as a direct product of two infinite groups, otherwise we say $\Gamma$ is \emph{reducible} (we will give the precise definition in \Cref{sec.irr}).  A celebrated application of Margulis's normal subgroup theorem \cite{Margulis1978} connects, in the case of lattices in semsimple Lie groups, irreducibility with vanishing of the first cohomology group.

\begin{thm}[Margulis]\label{thm.Margulis}
Let $\Gamma$ be a lattice in semisimple Lie group with finite centre and real rank at least $2$. If $H^1(\Gamma)\neq0$, then $\Gamma$ virtually splits as a direct product of two infinite groups.
\end{thm}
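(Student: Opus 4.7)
The strategy is to invoke Margulis's normal subgroup theorem (NST), which asserts that every normal subgroup of a classically irreducible lattice in a connected semisimple Lie group with finite centre, no compact factors, and real rank at least $2$ is either finite and central in the ambient group or of finite index.

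Before applying NST I would reduce to the case where $G$ has no compact factors: the projection of $\Gamma$ onto the non-compact part is again a lattice of the same real rank, and its kernel is $\Gamma$ intersected with a compact subgroup, hence finite, so abelianisations agree up to finite kernels and I can freely replace $\Gamma$ by this projection. Writing the remaining $G = G_1 \times \cdots \times G_k$ as a product of simple factors, I would then split into two cases according to whether $\Gamma$ is \emph{classically} reducible, meaning that some projection of $\Gamma$ to a nontrivial subproduct of the $G_i$ is discrete.

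In the classically reducible case, standard reduction theory (see Raghunathan or Margulis) gives a nontrivial decomposition $G = G' \times G''$ together with a commensuration of $\Gamma$ with a direct product $\Gamma' \times \Gamma''$ of lattices in the two factors; both $\Gamma',\Gamma''$ are infinite since $G',G''$ are non-trivial non-compact semisimple Lie groups, so $\Gamma$ virtually splits as a direct product of two infinite groups, as required. In the classically irreducible case, NST applies. From $H^1(\Gamma) \neq 0$ and the finite generation of $\Gamma$ (a lattice in a connected Lie group) I would extract a nontrivial homomorphism $\phi \colon \Gamma \to \ZZ$ from the free part of $\Gamma^{\mathrm{ab}}$; its kernel is normal of infinite index, so NST forces it to be finite and central. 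Then $\Gamma$ is virtually infinite cyclic, in particular virtually solvable, contradicting the Borel density theorem, which forces $\Gamma$ to be Zariski dense in the semisimple group $G$ and therefore not virtually solvable. This rules out the irreducible case under the hypothesis.

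The main obstacle is the classically reducible step: producing an explicit virtual direct product decomposition of $\Gamma$ from the mere existence of a discrete subproduct projection requires the reduction theory of lattices in semisimple Lie groups, and one has to be careful that both virtual factors end up infinite (which is automatic once both subproducts are non-compact and semisimple). Granting that classical input, the remainder is a short one-line application of NST combined with Borel density.
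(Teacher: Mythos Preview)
The paper does not supply a proof of this statement: it is quoted in the introduction as a classical consequence of Margulis's normal subgroup theorem, with a citation to \cite{Margulis1978}, and is used purely as motivation. There is therefore no ``paper's own proof'' to compare against.

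That said, your sketch is the standard derivation and is essentially correct. A couple of minor remarks. First, in the irreducible case you can bypass the explicit map to $\ZZ$ and simply apply NST to the commutator subgroup $[\Gamma,\Gamma]$: it is normal, and if it were finite then $\Gamma$ would be virtually abelian, contradicting Borel density; hence it has finite index and $\Gamma^{\mathrm{ab}}$ is finite, so $H^1(\Gamma;\ZZ)=\Hom(\Gamma,\ZZ)=0$. This is marginally cleaner than producing a surjection to $\ZZ$ and analysing its kernel, though your route works just as well. Second, your caveat about the reducible step is apt but not a genuine obstacle: the fact that a lattice with a discrete projection to a proper subproduct is commensurable with a product of lattices in complementary subproducts is a standard structural lemma (in Raghunathan, Margulis, or Zimmer), and both factors are automatically infinite because each subproduct is non-compact semisimple. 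So the argument goes through as you describe.
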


We will now broaden our scope to lattices in products of isometry groups of irreducible minimal $\CAT(0)$ spaces.  Here a $\CAT(0)$ space $X$ is \emph{irreducible} if $X$ does not split as a direct product of two subspaces and is \emph{minimal} if there is no $\Isom(X)$-invariant closed convex non-empty proper subspace $X'\subset X$.  In this later case we say that $\Isom(X)$ acts \emph{minimally}. The reader can consult \cite{BridsonHaefliger1999} for a comprehensive introduction to the theory of $\CAT(0)$ spaces and \cite{CapraceMonod2009a,CapraceMonod2009b,CapraceMonod2019} for a structure theory of the spaces and their isometry groups.  

In this more general setting the universal covering trick of Burger--Mozes shows that a generalisation of \Cref{thm.Margulis} even to lattices in products of trees and symmetric spaces fails (see \cite{BurgerMozes2000a}).  However, if the first cohomology group is non-zero we are able to deploy secondary invariants introduced in \cite{BieriNeumannStrebel1987,BieriRenz1988} called \emph{BNSR} or \emph{$\Sigma$-invariants} $\Sigma^n(\Gamma)$ and $\Sigma^n(\Gamma;\ZZ)$ which measure how far a first cohomology class is from a fibration $B\Gamma\to S^1$ of \emph{finite} CW complexes.    For an explicit example of an irreducible lattice with $H^1(\Gamma;\RR)\neq 0$, the reader is referred to \Cref{sec.LM}.

A first cohomology class $\varphi$ and its inverse $-\varphi$ are in $\Sigma^n(\Gamma)$ (resp. $\Sigma^n(\Gamma;\ZZ)$) if and only if $\varphi$ is $\mathsf{F}_n$-fibred (resp. $\mathsf{FP}_n$-fibred).  Here, $\varphi$ is $\mathsf{F}_n$-fibred (resp. $\mathsf{FP}_n$-fibred) if $\ker(\varphi)$ is type $\mathsf{F}_n$, that is, there exists a model for $K(\ker(\varphi),1)$ with finite $n$-skeleton (resp. type $\mathsf{FP}_n$, that is, there exists a projective resolution $P_\ast\to\ZZ$ over $\ZZ[\ker(\varphi)]$ such that for each $i\leq n$ the module $P_i$ is a finitely generated $\ZZ[\ker(\varphi)]$-module).  If $G$ is $\mathsf{F}_1$-fibred we may say $G$ is \emph{algebraically fibred}.  Motivated by this we ask the following question and answer it in several cases.

\begin{question}\label{Q.motivation}
Let $\Gamma$ be a uniform lattice in a product $X_1\times X_2$ of proper minimal unbounded $\CAT(0)$ spaces.  If $\Sigma^n(\Gamma)$ or $\Sigma^n(\Gamma;\ZZ)$ is non-empty for some $n\geq1$, then is $\Gamma$ necessarily reducible?
\end{question}

Note that \Cref{Q.motivation} appears to be open even in the case of a product of trees.  There are plenty of irreducible $\CAT(0)$ groups which virtually fibre - we will explain how these either give positive answers to \Cref{Q.motivation} or are not within its remit.  In the seminal work of Bestvina and Brady \cite{BestvinaBrady1997}, the authors show that there exist characters of right angled Artin groups (RAAGs) which $\mathsf{FP}_2$-fibre but not $\mathsf{F}_2$-fibre.  We mention here that every RAAG is either a direct product of two infinite subgroups or is a lattice in a single irreducible $\CAT(0)$ space.  Generalisations to obtain uncountably many (quasi-isometry classes of) groups of type $\mathsf{FP}$ have been considered by Leary \cite{Leary2018a} (Kropholler--Leary--Soroko \cite{KrophollerLearySoroko2020}) and Brown--Leary \cite{BrownLeary2020}.  For right angled Coxeter groups (RACGs) there is work of Jankiewicz--Norin--Wise \cite{JankiewiczNorinWise2021} where the authors algebraically fibre certain finite index subgroups and work of Schesler--Zaremsky \cite{ScheslerZaremsky2021} where the authors take a probabilistic viewpoint.  As in the case of RAAGs every RACG is either a direct product of two infinite subgroups or is a lattice in a single irreducible $\CAT(0)$ space.

A deep theorem of Agol states that hyperbolic $3$-manifolds virtually fibre \cite{Agol2013}.  We briefly mention that this result has been generalised to the setting of RFRS groups by Kielak \cite{Kielak2020} and improved further by Fisher \cite{Fisher2021}.  The relationship to homology growth has been explored in \cite{FisherHughesLeary2023} and to profinite rigidity in \cite{HughesKielak2022}. In higher dimensions a number of hyperbolic $n$-manifolds have been algebraically fibred in the work of Battista, Isenrich, Italiano, Martelli, Migliorini, and Py \cite{BattistaMartelli2021,ItalianoMartelliMigliorini2021a,IsenrichMartelliPy2021}.  We highlight the paper of Italiano--Martelli--Migliorini \cite{ItalianoMartelliMigliorini2021b} where the authors fibre a hyperbolic $5$-manifold over $S^1$.  Of course in every case each group is a lattice in a single irreducible $\CAT(0)$ space.

In the case of a uniform lattice in the product of a locally-finite tree and a Euclidean space we give a positive answer to Question~\ref{Q.motivation}.  For an explicit example of an irreducible lattice in such a product with non-trivial first cohomology the reader is referred to \Cref{sec.LM}.  The existence of irreducible lattices was demonstrated by Leary and Minasyan - where they construct the first examples of $\CAT(0)$ but not biautomatic groups \cite{LearyMinasyan2019}; a rough classification of such lattices was obtained by the author in \cite{Hughes2021a}.  Note that the following theorem is new even for Leary--Minasyan groups.  

\begin{duplicate}[\Cref{thmx.fibring}]
Let $\calt$ be a locally-finite leafless cocompact tree, not isometric to $\RR$, and let $T=\Aut(\calt)$.  Let $\Gamma$ be a uniform $(\Isom(\EE^n)\times T)$-lattice, then $\Gamma$ virtually algebraically fibres if and only if $\Gamma$ is reducible.
\end{duplicate}

A group $\Gamma$ \emph{virtually fibres (over the circle)} if there exists a finite-index subgroup $\Gamma'\leq\Gamma$ and a character $\varphi\in H^1(\Gamma';\RR)$ such that $\ker(\varphi)$ is of type $\mathsf{F}$, that is, there exists a finite model for $K(\ker(\varphi),1)$.

\begin{duplicate}[\Cref{corx.fibring.dim2}]
With notation as in Theorem~\ref{thmx.fibring}, suppose $n=2$.  Then, $\Gamma$ virtually fibres if and only if $\Gamma$ is reducible.
\end{duplicate}

The main obstruction to extending the previous corollary to higher dimensional Euclidean spaces (i.e. $n\geq3$) is that we do not know if every $(\Isom(\EE^{n-1})\times T)$-lattice is virtually torsion-free (see \cite[Question~9.1]{Hughes2021a}).

On the other hand we study lattices in products of symmetric spaces with the universal cover of a Salvetti complex and find many examples of lattices which fibre over the circle.

\begin{duplicate}[\Cref{thmx.C}]
    There exists an irreducible lattice fibring over the circle.
\end{duplicate}

More precisely, we give a family of examples of irreducible uniform $\PSL_2(\RR)\times S_L$ lattices which fibre over the circle.  Here $S_L=\Aut(\widetilde X_L)$ where $X_L$ is the Salvetti complex associated with the right-angled Artin group $A_L$. Note the construction is much more general.  The reader is referred to Sections \ref{extending actions} and \ref{sec.irr.fibring} for more details.

\subsection{Conjectures and questions}
One by product of the proof of \Cref{thmx.fibring} is the following computation of the first cohomology for a large family of lattices.

\begin{duplicate}[\Cref{prop.isomEnxX.H1}]
Let $X$ be an irreducible locally finite $\CAT(0)$ polyhedral complex, let $A=\Aut(X)$ act cocompactly and minimally, and let $\Gamma$ be a uniform $(\Isom(\EE^n)\times A)$-lattice.  If $\Gamma$ is irreducible, then $H^1(\Gamma;\ZZ)\cong H^1(X/\Gamma;\ZZ)$.
\end{duplicate}

The author suspects that this phenomena is much more general and conjectures the following.

\begin{conjecture}
    Let $X$ be an irreducible locally finite $\CAT(0)$ polyhedral complex, let $A=\Aut(X)$ act cocompactly and minimally, let $Y$ be a symmetric space of non-compact type with corresponding semi-simple Lie group $G$, and let $\Gamma$ be a uniform $(G\times \Isom(\EE^n)\times A)$-lattice.  If $\Gamma$ is irreducible, then $H^1(\Gamma;\ZZ)\cong H^1(X/\Gamma;\ZZ)$.
\end{conjecture}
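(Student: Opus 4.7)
The plan is to extend the argument for \Cref{prop.isomEnxX.H1} by incorporating the semisimple Lie group factor. Write $\pi_G$, $\pi_E$, $\pi_A$ for the restrictions to $\Gamma$ of the three ambient projections, and set $N := \ker \pi_A \leq \Gamma$, so that $N$ sits inside $G \times \Isom(\EE^n)$. After passing to a torsion-free finite-index subgroup and arranging $\pi_A(\Gamma) \leq \Aut(X)$ to act freely (this is the first technical hurdle, which is controlled by the size of $N$), the quotient $X/\pi_A(\Gamma)$ is a $K(\pi_A(\Gamma),1)$, and inflation along $\Gamma \twoheadrightarrow \pi_A(\Gamma)$ produces the natural map $H^1(X/\Gamma;\ZZ) \to H^1(\Gamma;\ZZ)$, which is automatically injective via the five-term exact sequence. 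The main task is surjectivity: equivalently, every character of $\Gamma$ must vanish on $N$.

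For the $\Isom(\EE^n)$-component I would recycle the argument of \Cref{prop.isomEnxX.H1}: characters detectable by the Euclidean projection arise from a linear functional on the translation subgroup of $\pi_E(\Gamma)$, and algebraic irreducibility together with the crystallographic structure of $\pi_E(\Gamma)$ rules these out. The genuinely new content concerns the $G$-component: one must show that $N \cap G$ receives no non-trivial character from $\Gamma$. When $\pi_G(\Gamma)$ is a lattice in $G$, I would combine Margulis vanishing for higher-rank factors of $G$ with the Caprace--Monod structure theory for products, and in the rank-one case appeal to Carlson--Toledo / Kotschick rigidity together with the extra constraints from the $\EE^n$- and $A$-factors to force the vanishing. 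When $\pi_G(\Gamma)$ is dense in $G$, I would use that a connected semisimple Lie group is perfect, promoting this via a superrigidity-style extension argument to show that no character of $\Gamma$ can be non-trivial on $N \cap G$.

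The main obstacle will be the rank-one case, in particular $G = \PSL_2(\RR)$, where lattices in $G$ carry many characters and Margulis-type rigidity is unavailable. Indeed, \Cref{thmx.C} constructs irreducible lattices in $\PSL_2(\RR) \times S_L$ which do admit fibrings, and the conjecture insists that every such fibring already descends from a character on $X/\Gamma$. A natural analytic route is to represent each class in $H^1(\Gamma;\RR)$ by a $\Gamma$-equivariant closed $1$-form on the product $Y \times \EE^n \times X$, where $Y$ is the symmetric space of $G$, and use algebraic irreducibility to force such a form to be locally constant in the $Y$- and $\EE^n$-directions, so that only the $X$-contribution survives. That this reduction seems to require ideas beyond those used in \Cref{prop.isomEnxX.H1} is presumably why the author records the statement as a conjecture rather than a theorem.
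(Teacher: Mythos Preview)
The statement is recorded in the paper as a \emph{conjecture}; there is no proof to compare against. The paper's only commentary is that \Cref{claim.2} disposes of the Euclidean factor and that one may reduce to the case where no factor of $G$ has property~(T), which leaves $G$ a product of copies of $\SO(m,1)$ and $\SU(n,1)$.

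Your outline has a structural problem at the very first step. You propose to pass to finite index so that $\pi_A(\Gamma)$ acts freely on $X$, and then identify $H^1(X/\Gamma)$ with $H^1(\pi_A(\Gamma))$ via the five-term sequence for $1\to N\to\Gamma\to\pi_A(\Gamma)\to1$. But the cell stabilisers $\Gamma_\sigma$ of the $\Gamma$-action on $X$ are, by \Cref{thm.structure}, virtually uniform lattices in $G\times\Isom(\EE^n)$ and hence infinite, while $N=\ker\pi_A$ is a normal subgroup of $\Gamma$ contained in every $\Gamma_\sigma$. The image $\pi_A(\Gamma_\sigma)\cong\Gamma_\sigma/N$ is therefore infinite whenever $N$ has infinite index in $\Gamma_\sigma$, which it does in all the interesting examples (for Leary--Minasyan groups $N$ is trivial; when Margulis' normal subgroup theorem applies $N$ is finite). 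So $\pi_A(\Gamma)$ cannot be made to act freely by passing to finite index, and your identification of $H^1(X/\Gamma)$ with $H^1(\pi_A(\Gamma))$ is unavailable. The paper's proof of \Cref{prop.isomEnxX.H1} avoids this entirely: it runs the equivariant spectral sequence for the $\Gamma$-action on $X$, in which $E_2^{p,0}\cong H^p(X/\Gamma)$ holds with no freeness hypothesis, and the required input is that every character of $\Gamma$ vanishes on the \emph{cell stabilisers} $\Gamma_\sigma$, not on $N$. In the conjectural setting those stabilisers are virtually $(G\times\Isom(\EE^n))$-lattices, and that is precisely where the paper's reduction to rank-one factors without property~(T) sits.

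You do correctly isolate the rank-one case as the genuine obstacle, and your final paragraph rightly concedes that the specific tools you name (Carlson--Toledo, Kotschick, a superrigidity-style extension) do not obviously close the gap: those results constrain K\"ahler or harmonic-map behaviour of representations, not the restriction of a character of an ambient lattice to a commensurated $\SO(m,1)$- or $\SU(n,1)$-sublattice. The statement remains open.
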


Using \Cref{claim.2} one can discount the Euclidean factor.  Moreover, one can easily reduce to the case where no factor of $G$ has Kazhdan's property (T).  This essentially leaves the case where $G$ is a product of $\SO(m,1)$ and $\SU(n,1)$ for various $m$ and $n$.

Let $L$ be a flag complex and suppose $L$ is not connected.  It is well known that no character of the right-angled Artin group $A_L$ is algebraically fibred.  We suspect this behaviour holds for all irreducible uniform $(\Isom(\EE^n)\times S_L)$-lattices.

\begin{question}
Let $L$ be a flag complex.  Is it true that if $L$ is not connected, then no irreducible uniform $(\Isom(\EE^n)\times S_L)$-lattice is algebraically fibred?
\end{question}


\subsection{Acknowledgements}
This paper contains material from the author's PhD thesis \cite{HughesThesis}.  The author would like to thank his PhD supervisor Ian Leary for his guidance and support.  The author would like to thank Dawid Kielak for carefully reading an earlier draft of this paper.  The author would also like to thank Pierre-Emmanuel Caprace and Jingyin Huang for helpful correspondence.  This work was supported by the Engineering and Physical Sciences Research Council grant number 2127970.  This work has received funding from the European Research Council (ERC) under the European Union’s Horizon 2020 research and innovation programme (Grant agreement No. 850930).  The author would also like to thank an anonymous referee for catching a fatal mistake in an earlier draft of this paper.  The author would like to thank both referees for numerous helpful comments which dramatically improved the exposition of this paper.

\subsection{Bibliographical note}
 This is the second part of a longer paper contained in the author's PhD thesis which was split at the request of a referee (see \cite[Paper~4]{HughesThesis}).  The first part of this longer paper can be found in \cite{Hughes2021a}.  Note that some of the results here are not contained in the author's PhD thesis.  Also note that a number of group presentations and results regarding residual finiteness and autostackability will only exist in the thesis version.  Finally, we remark the existence of the companion papers \cite{Hughes2021b,Hughes2023hdQ,HughesValiunas2022,HughesValiunas2023} where similar techniques are used to prove a number of other results.

Since writing this paper and fixing the error caught by one of the anonymous referees, Jingyin Huang and Mahan Mj \cite{HuangMj2023} (as well as Camille Horbez and Jingyin Huang \cite{HorbezHuang2023})  have considered related constructions in the context of commensurators (and measure equivalence).  Their construction fixes the major oversight with the construction in an earlier version of this paper.  It remains unclear to the author when there exist irreducible uniform lattices in products of Salvetti complexes and Euclidean spaces.

\section{Preliminaries}\label{sec.prelims}

\subsection{Lattices}
Let $H$ be a locally compact topological group with right invariant Haar measure $\mu$.  A discrete subgroup $\Gamma\leq H$ is a \emph{lattice} if the covolume $\mu(H/\Gamma)$ is finite.  A lattice is \emph{uniform} if $H/\Gamma$ is compact and \emph{non-uniform} otherwise.  Let $S$ be a right $H$-set such that for all $s\in S$, the stabilisers $H_s$ are compact and open.  Then, if $\Gamma\leq H$ is discrete, the stabilisers of $\Gamma$ acting on $S$ are finite.

Let $X$ be a locally finite, connected, simply connected simplicial complex. The group $H=\Aut(X)$ of simplicial automorphisms of $X$ naturally has the structure of a locally compact topological group, where the topology is given by uniform convergence on compacta.

Note that $T$, the automorphism group of a locally-finite tree $\calt$, admits lattices if and only if the group $T$ is unimodular (that is, the left and right Haar measures coincide).  In this case we say $\calt$ is \emph{unimodular}.  We say a tree $\calt$ is \emph{leafless} if it has no vertices of valence one.

\subsection{Irreducibility}\label{sec.irr}
Two notions of irreducibility will feature in this paper; for uniform $\CAT(0)$ lattices they are equivalent due to a theorem of Caprace--Monod.  See \cite[Section 2.3]{Hughes2021a} for an extended discussion concerning these definitions.

Let $X=\EE^n\times X_1\times\dots\times X_m$ be a product of irreducible proper $\CAT(0)$ spaces with each $X_i$ not quasi-isometric to $\EE^1$ and let $H= H_0\times H_1\times\dots\times H_m\coloneqq \Isom(\EE^n)\times\Isom(X_1)\times\dots\times\Isom(X_m)$, such that for each $i\geq 1$ the group $H_i$ is non-discrete, cocompact, and acting minimally on $X_i$.  Let $\Gamma$ be a uniform lattice in $H$.  Note that by \cite[Addendum~1.8]{CapraceMonod2009a} and \cite[Corollary~3.12]{CapraceMonod2009b}, since $\Gamma$ is finitely generated, the product decomposition of $X$ above is unique.

We have projections $\pi_i\colon \Gamma \to H_i$ for each factor $H_i$.  The Euclidean factor gives us two further projections, firstly, $\pi_{\Isom(\EE^n)}\colon \Gamma\to\Isom(\EE^n)\cong \RR^n\rtimes \OO(n)$ and secondly, $\pi_{\OO(n)}$ which is defined as the composition $\Gamma\to\Isom(\EE^n)\to\OO(n)$.

Suppose $n=0$, then we say $\Gamma$ is \emph{weakly irreducible} if the projection of $\Gamma$ to each proper subproduct $H_I\coloneqq \prod_{i\in I}H_i$ for each proper subset $I\subset\{1,\dots,m\}$ is non-discrete. 

Suppose $n=1$, then $\Gamma$ always virtually splits a direct product $\Gamma'\times \Z$ by \cite{CapraceMonod2019}. In this case we always define $\Gamma$ to be \emph{reducible}.  

Suppose $n\geq 2$.  Let $\ell$ be the maximal integer such that $\EE^n=\prod_{j=1}^\ell \EE^{k_j}$ with each $k_j\geq 1$ such that the product decomposition is preserved by \emph{some} finite index subgroup $\Lambda$ of $\Gamma$.  Observe that $\Lambda$ is contained in $\prod_{j=1}^\ell\Isom(\EE^{k_j})\times\prod_{i=1}^m H_i$.  Denote each $\Isom(\EE^{k_j})$ by $E_j$ and the corresponding orthogonal group by $O_j$.  Then for $\Gamma$ to be \emph{weakly irreducible} we require that each $k_j\geq2$, and that the projection $\pi_{I,J}$ of $\Lambda$ to each proper subproduct, $G_{I,J}\coloneqq \prod_{j\in J}E_j\times\prod_{i\in I}H_i$ for $I\subseteq\{1,\dots,m\}$ and $J\subseteq\{1,\dots,\ell\}$, of $H$ is non-discrete (here at least one of $I$ or $J$ is a proper subset).

We say $\Gamma$ is \emph{algebraically irreducible} if $\Gamma$ has no finite index subgroup splitting as the direct product of two infinite groups.

For uniform lattices the two definitions are equivalent by \cite[Theorem~4.2]{CapraceMonod2009b}; so we will simply refer to a lattice as \emph{irreducible} or \emph{reducible}.

\subsection{Graphs and complexes of lattices}
Let $\Gamma$ be a group and $K,L\leq \Gamma$ be subgroups.  If $L\cap K$ has finite index in $L$ and $K$ then we say $L$ and $K$ are \emph{commensurable}.  The \emph{commensurator} of $L$ in $\Gamma$ is the subgroup \[\Comm_\Gamma(L)\coloneqq \{g\in\Gamma\ |\ L^g\cap L \text{ has finite index in } L\text{ and } L^g\}.\]
If $\Comm_\Gamma(L)=\Gamma$ then we say $L$ is \emph{commensurated}.

Rather than recall the definitions and machinery from \cite{Hughes2021a} we will use it as a black box.  The key result for us is the following well known lemma.

\begin{lemma}\label{thm.structure} 
Let $X=X_1\times X_2$ be a proper cocompact minimal $\CAT(0)$ space and $H=\Isom(X_1)\times\Isom(X_2)$.  Suppose $X_1$ is a $\CAT(0)$ polyhedral complex.  Then, for any uniform $H$-lattice $\Gamma$, the cell stabilisers of $X_1$ in $\Gamma$ are commensurated, commensurable, and isomorphic to finite-by-$\{\Isom(X_2)$-lattices$\}$.
\end{lemma}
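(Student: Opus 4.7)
My plan rests on the observation that, since $X_1$ is a $\CAT(0)$ polyhedral complex and $\Isom(X_1)$ preserves the polyhedral structure, the stabiliser $K_c = \Isom(X_1)_c$ of any cell $c$ is a compact open subgroup of $\Isom(X_1)$. Writing $\Gamma_c = \Gamma \cap (K_c \times \Isom(X_2))$, the kernel of the restriction of $\pi_2$ to $\Gamma_c$ is $\Gamma_c \cap (K_c \times \{1\})$, a discrete subgroup of a compact group and hence finite.

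Next I will show that $\Gamma_c$ is a uniform lattice in the open subgroup $U = K_c \times \Isom(X_2)$ via a double-coset argument. Since $\Isom(X_1 \times X_2)$ acts cocompactly on $X_1 \times X_2$ with compact point stabilisers, so does $\Gamma$; projecting gives $\pi_1(\Gamma)$ acting cocompactly on $X_1$. Local finiteness of $X_1$ then yields that $\pi_1(\Gamma)$ has only finitely many orbits on the cells in the $\Isom(X_1)$-orbit of $c$; equivalently, the double-coset space $\Gamma \backslash H / U$ (identified with $\pi_1(\Gamma) \backslash \Isom(X_1) / K_c$) is finite. Since $U$ is open, the right $U$-orbits in $\Gamma \backslash H$ are clopen; being finite in number and covering the compact space $\Gamma \backslash H$, each orbit is compact. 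Taking the identity double coset gives $\Gamma_c \backslash U$ compact, so $\Gamma_c$ is uniform in $U$. Projecting by $\pi_2$ and quotienting by the finite kernel, $\pi_2(\Gamma_c)$ is a uniform lattice in $\Isom(X_2)$, delivering the finite-by-lattice structure.

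For commensurability of cell stabilisers $\Gamma_c, \Gamma_{c'}$: the $K_c$-orbit of $c'$ is both compact (as $K_c$ is compact) and discrete (as $X_1$ is polyhedral), hence finite. A fortiori the $\pi_1(\Gamma_c)$-orbit of $c'$ is finite, forcing $[\Gamma_c : \Gamma_c \cap \Gamma_{c'}] < \infty$; symmetry gives the analogous bound for $\Gamma_{c'}$. Applying this with $c' = \gamma c$ for $\gamma \in \Gamma$ and noting $\Gamma_{\gamma c} = \gamma \Gamma_c \gamma^{-1}$ then yields the commensurated statement.

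The hardest point I expect is confirming that $\Gamma_c$ really is a uniform lattice in $U$: one needs the bookkeeping that finitely many clopen $U$-orbits cover $\Gamma \backslash H$, and that consequently each is compact. After that, the remaining conclusions follow essentially formally from compactness of $K_c$ and local finiteness of the cell structure of $X_1$.
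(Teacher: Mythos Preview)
The paper does not actually prove this lemma: it is introduced as a ``well known'' result to be used as a black box, with a pointer to the companion paper \cite{Hughes2021a} for the underlying machinery of complexes of lattices. So there is no in-paper proof to compare against.

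Your argument is correct and is essentially the standard one. The three ingredients are exactly right: (i) cell stabilisers $K_c\leq\Isom(X_1)$ are compact and open, so $U=K_c\times\Isom(X_2)$ is open in $H$ and the kernel of $\pi_2|_{\Gamma_c}$ is finite; (ii) finiteness of $\Gamma\backslash H/U$ together with openness of $U$ makes the $U$-orbit through the identity coset clopen, hence compact, and the orbit map $U\to\Gamma\backslash H$ is open, so $\Gamma_c\backslash U$ is compact and $\pi_2(\Gamma_c)$ is a uniform $\Isom(X_2)$-lattice; (iii) compactness of $K_c$ plus local finiteness of $X_1$ gives finite $K_c$-orbits on cells, whence commensurability and commensuration via orbit--stabiliser.

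One small wording issue: you deduce cocompactness of $\Gamma$ on $X$ from cocompactness of $\Isom(X_1\times X_2)$, but $\Gamma$ is only assumed to be a uniform lattice in $H=\Isom(X_1)\times\Isom(X_2)$, not in the full isometry group. What you actually need (and what holds in every application in the paper) is that $H$ itself acts cocompactly on $X$, equivalently that each $\Isom(X_i)$ acts cocompactly on $X_i$; this is part of the standing hypotheses in the paper's setup and in \cite{Hughes2021a}. With that adjustment the argument goes through unchanged.
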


In our situation we will take $X_1$ to be a locally finite tree, or the universal cover of a Salvetti complex for a right-angled Artin group, and $X_2$ to be some proper irreducible cocompact minimal $\CAT(0)$ space.  The quotient space $X_1/\Gamma$ is endowed with a natural graph or complex of groups structure.  In the language of \cite{Hughes2021a} we call this data a \emph{graph} or \emph{complex of $\Isom(\EE^n)$-lattices}.  For example, every uniform $H$-lattice (where $H=\Aut(X_1)\times\Isom(\EE^n)$) splits as a graph or complex of commensurable finite-by-$\{n$-crystallographic$\}$ groups.

\subsection{Leary--Minasyan groups}\label{sec.LM}
The following groups were introduced in \cite{LearyMinasyan2019} by Leary and Minasyan as a class of groups containing the first examples of $\CAT(0)$ but not biautomatic groups; they were classified up to isomorphism by Valiunas \cite{Valiunas2020}.  In fact, they are not subgroups of any biautomatic group \cite{Valiunas2021a}.  Let $n \geq 0$, let $A \in \GL_n(\QQ)$, and let $L \leq \ZZ^n \cap A^{-1}(\ZZ^n)$ be a finite index subgroup. The group $\LM(A,L)$ is defined by the presentation
\begin{equation*}
\langle x_1,\ldots,x_n,t \mid [x_i,x_j]=1 \text{ for } 1 \leq i < j \leq n, t\mathbf{x}^{\mathbf{v}}t^{-1} = \mathbf{x}^{A\mathbf{v}} \text{ for } \mathbf{v} \in L \rangle,
\end{equation*}
where we write $\mathbf{x}^{\mathbf{w}} \coloneqq  x_1^{w_1} \cdots x_n^{w_n}$ for $\mathbf{w} = (w_1,\ldots,w_n) \in \ZZ^n$.  If $L$ is the largest subgroup of $\ZZ^n$ such that $AL$ is also a subgroup of $\ZZ^n$, then we denote $\LM(A,L)$ by $\LM(A)$.  We refer to the groups $\LM(A,L)$ and $\LM(A)$ as \emph{Leary--Minasyan groups}. The groups clearly split as HNN extensions $\ZZ^n\ast_L$.  The groups are $\CAT(0)$ if and only if $A$ is conjugate to an orthogonal matrix in $\GL_n(\RR)$ \cite[Theorem~7.2]{LearyMinasyan2019}.

As a concrete example, take \[A=\begin{bmatrix} 3/5 & -4/5\\ 4/5 & 3/5  \end{bmatrix} \text{ and } L=\left\langle\begin{bmatrix} 2\\ -1\end{bmatrix}, \begin{bmatrix} 1\\ 2\end{bmatrix} \right\rangle \text{ so } AL=\left\langle\begin{bmatrix} 2\\ 1\end{bmatrix}, \begin{bmatrix} -1\\ 2\end{bmatrix} \right\rangle.\]  Note that $L$ is index $5$ in $\ZZ^2$ and so must be a maximal subgroup.  It follows that
    \begin{equation}\label{eqn.LMA}
        \LM(A,L)=\LM(A)=\langle a,b, t\ |\ [a,b],\ ta^2b^{-1}t^{-1}=a^2b,\ tab^2t^{-1}=a^{-1}b^2\rangle. 
    \end{equation}

We say a matrix $A\in \GL_n(\RR)$ is \emph{irreducible} if no $A^k$ for $k\geq 1$ leaves invariant a proper nontrivial subspace of $\RR^n$. 

\begin{thm}\emph{\cite[Theorem~7.5]{LearyMinasyan2019}}
Suppose that $A$ has infinite order and is conjugate in $\GL_n(\RR)$ to an orthogonal matrix. Then, $\LM(A,L)$ is a uniform lattice in $\Isom(\EE^n)\times\Aut(\calt)$ whose projections to the factors are not discrete.  In particular, if $A$ is an irreducible matrix, then $\LM(A,L)$ is an irreducible lattice.
\end{thm}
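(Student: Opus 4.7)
The plan is to realise $\LM(A,L)$ as a discrete cocompact subgroup of $\Isom(\EE^n) \times \Aut(\calt)$ via a natural diagonal action, where $\calt$ is the Bass-Serre tree of the HNN splitting $\LM(A,L) = \ZZ^n *_L$, and then verify the non-discreteness of the two projections and deduce irreducibility from the criterion in \Cref{sec.irr}.

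First, equip $\RR^n$ with an inner product with respect to which $A$ is orthogonal (which exists by hypothesis), and let $\EE^n$ denote the resulting Euclidean space. Define $\rho_1 \colon \LM(A,L) \to \Isom(\EE^n)$ by $x^v \mapsto \tau_v$ and $t \mapsto A$; the HNN relation $t x^v t^{-1} = x^{Av}$ is respected since $A \tau_v A^{-1} = \tau_{Av}$. Let $\rho_2 \colon \LM(A,L) \to \Aut(\calt)$ be the standard Bass-Serre action, whose vertex stabilisers are conjugates of $\ZZ^n$, and set $\rho = (\rho_1, \rho_2)$. The map $\rho$ is injective because $\ker \rho_2$ is contained in the vertex stabiliser $\ZZ^n$, on which $\rho_1$ is the obvious embedding into translations. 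For discreteness, any element close to the identity in $\Aut(\calt)$ fixes the base vertex and so lies in $\ZZ^n$; its $\rho_1$-image is then a translation in the discrete lattice $\ZZ^n \subset \RR^n$, which forces it to be trivial if it is also close to the identity there. Cocompactness is immediate because $\calt/\LM(A,L)$ is a finite graph and $\ZZ^n$ acts cocompactly on $\EE^n$, so the product quotient is compact.

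For non-discreteness of $\rho_1$, observe that the image contains the translations $\tau_{A^k v}$ for all $k \in \ZZ$ and $v \in \ZZ^n$, arising as $\rho_1(t^k x^v t^{-k})$. Since $A$ has infinite order in the compact Lie group $\OO(n)$, the cyclic subgroup $\langle A \rangle$ is not closed, so there exist $A^{k_j} \to I$ with $A^{k_j} \neq I$, yielding arbitrarily small non-zero vectors $A^{k_j} v - v$ in the translation subgroup of the image. For $\rho_2$, the main technical step is to exhibit, for each $k$, non-trivial elements of $\ZZ^n$ pointwise fixing the ball $B_k(v_0) \subset \calt$ but lying outside $\ker \rho_2$; such elements are constructed as non-identity elements of finite intersections of finite-index subgroups of the form $\ZZ^n \cap A^j L$, and the infinite order of $A$ ensures that these intersections strictly shrink with $k$ rather than collapsing into the kernel. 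This is the main obstacle, and is where one invokes the careful local-action analysis from \cite{LearyMinasyan2019}.

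Finally, suppose $A$ is irreducible. Then no $A^k$-invariant proper non-trivial orthogonal decomposition of $\RR^n$ exists, so the Euclidean factor admits no further splitting, i.e., $\ell = 1$ in the notation of \Cref{sec.irr}. By the definition of weak irreducibility it therefore suffices to check that the projections of $\LM(A,L)$ to $\Isom(\EE^n)$ and to $\Aut(\calt)$ are each non-discrete, which we have already established. Since $\LM(A,L)$ is a uniform $\CAT(0)$ lattice, \cite[Theorem~4.2]{CapraceMonod2009b} identifies weak irreducibility with algebraic irreducibility, completing the proof.
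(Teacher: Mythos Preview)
The paper does not supply its own proof of this statement: it is quoted verbatim as \cite[Theorem~7.5]{LearyMinasyan2019} and immediately followed only by an illustrative description of the $\EE^2$-action for the explicit example \eqref{eqn.LMA}. There is therefore nothing in the paper to compare your argument against; your sketch is an outline of the original Leary--Minasyan proof rather than an alternative to anything in this paper.

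As a sketch it is broadly sound. Two minor comments. First, for non-discreteness of $\rho_1$ you can argue more directly: the image contains the infinite cyclic group $\langle A\rangle\subset\OO(n)$, and an infinite subgroup of a compact Lie group is never discrete. Second, your handling of $\rho_2$ is essentially a pointer back to \cite{LearyMinasyan2019}, which is honest but means the substantive step is not actually carried out in your write-up; the issue there is not that the intersections ``strictly shrink'' but that for each $k$ one must exhibit an element fixing $B_k(v_0)$ yet acting nontrivially \emph{somewhere} on $\calt$, i.e.\ lying outside the normal core of $\ZZ^n$. That is indeed the content of the local analysis in the source.
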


We will detail the action on $\EE^2$ in the case of the Leary--Minasyan group \eqref{eqn.LMA}.  The group $\LM(A)$ has a representation $\pi$ to $\Isom(\EE^n)$ given by $\pi(a)=[1,0]^{\mathrm T}$, $\pi(b)=[0,1]^\mathrm{T}$, and $\pi(t)=A$.  Here $\mathrm{T}$ denotes transpose.  The matrix $A$ is a rotation by the irrational number $\cos^{-1}(3/5)$ and so has infinite order.  In particular, $\LM(A)$ is irreducible.

\section{Fibring lattices in a product of a tree and a Euclidean space}\label{sec.treeCase}
In this section we characterise irreducible $(\Isom(\EE^n)\times T)$-lattices as those which do not virtually $\mathsf{F}_1$-fibre (Theorem~\ref{thmx.fibring}).  Note that this result is new even for Leary-Minasyan groups.  Before we prove the theorem, we will collect some propositions.  Note that we do not need the full power of \Cref{prop.isomEnxX.H1} but only \Cref{claim.2}.  Thus, the reader who is not interested in cohomology computations may skip the spectral sequence argument.

\begin{prop}\label{prop.isomEnxX.H1}
Let $X$ be an irreducible locally finite $\CAT(0)$ polyhedral complex, let $A=\Aut(X)$ act cocompactly and minimally, and let $\Gamma$ be a uniform $(\Isom(\EE^n)\times A)$-lattice.  If $\Gamma$ is algebraically irreducible, then $H^1(\Gamma;\ZZ)\cong H^1(X/\Gamma;\ZZ)$.
\end{prop}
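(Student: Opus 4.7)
The plan is to prove that the surjection $\Gamma\onto\pi_1(X/\Gamma)$ induces an isomorphism on $H^1(-;\ZZ)$. Injectivity is automatic from surjectivity of the quotient map, so I need surjectivity: every character $\varphi\colon\Gamma\to\ZZ$ must factor through $\pi_1(X/\Gamma)$, i.e.\ it must vanish on every cell stabilizer~$\Gamma_\sigma$ of the $\Gamma$-action on~$X$. The natural tool is the Cartan--Leray spectral sequence
$$E_2^{p,q} = \mathcal{H}^p(X/\Gamma;\{H^q(\Gamma_\sigma;\ZZ)\}) \;\Rightarrow\; H^{p+q}(\Gamma;\ZZ)$$
for the action of $\Gamma$ on the contractible complex $X$, whose low-degree five-term sequence reduces the statement to showing that the edge map $H^1(\Gamma;\ZZ)\to H^0(X/\Gamma;\{H^1(\Gamma_\sigma;\ZZ)\})$ is zero. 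This character-vanishing statement is what I would expect \Cref{claim.2} to provide, and it alone would already suffice for the applications to \Cref{thmx.fibring}.

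By \Cref{thm.structure} every stabilizer $\Gamma_\sigma$ is finite-by-$n$-crystallographic, so it contains a characteristic translation lattice $T_\sigma\cong\ZZ^n$ with finite quotient the point group $P_\sigma\leq\OO(n)$. Since $\ZZ$ is torsion-free and $\Gamma_\sigma/T_\sigma$ is finite, it suffices to show $\varphi|_{T_\sigma}=0$. Set $T_\Gamma:=\Gamma\cap\ker(\pi_{\OO(n)})$, the full translation subgroup; as $T_\sigma\leq T_\Gamma$, it is enough to show $\varphi|_{T_\Gamma}$ vanishes on its Euclidean component. A direct semidirect-product calculation in $\Isom(\EE^n)=\RR^n\rtimes\OO(n)$ shows that conjugation of $\Gamma$ on $T_\Gamma$ acts on the Euclidean part by the linear action of $\pi_{\OO(n)}(\Gamma)\leq\OO(n)$. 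Because $\varphi$ is a character, $\varphi|_{T_\Gamma}$ is $\Gamma$-conjugation-invariant, and hence descends (after $\otimes\QQ$) to a $\pi_{\OO(n)}(\Gamma)$-invariant linear functional on the $\QQ$-span of the Euclidean projection of $T_\Gamma$ in $\RR^n$.

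The irreducibility hypothesis (\S\ref{sec.irr}), applied to every finite-index subgroup of $\Gamma$, forces $\pi_{\OO(n)}(\Gamma)$ to act irreducibly on $\RR^n$---otherwise one could further orthogonally split the Euclidean factor, contradicting the maximality of~$\ell$ in the definition. An irreducible orthogonal action admits no nonzero fixed vector, so the invariant functional above is forced to be zero, whence $\varphi|_{T_\sigma}\otimes\QQ=0$ and hence $\varphi|_{T_\sigma}=0$, as required. The main obstacle is the careful treatment of the ``purely $A$-valued'' piece $K:=\Gamma\cap(\{1\}\times A)$, a normal discrete subgroup of $\Gamma$ on which $\varphi$ could \emph{a priori} be nonzero and to which the linear descent above says nothing. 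I would dispose of this by showing that irreducibility forces $K$ to be finite (so $\varphi|_K=0$ automatically for target $\ZZ$): otherwise an infinite $K$ combined with the commensurated stabilizer structure of \Cref{thm.structure} should, via Caprace--Monod, exhibit a virtual splitting of $\Gamma$ into two infinite direct factors, contradicting algebraic irreducibility.
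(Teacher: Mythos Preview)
Your spectral-sequence reduction to the vanishing of $\varphi$ on cell stabilisers is exactly what the paper does (the paper works from the $E_1$-page of the equivariant spectral sequence of \cite[VII.7--8]{Brown1982} rather than a five-term sequence, but the content is identical, and indeed the paper notes that \Cref{claim.2} alone is what is needed for \Cref{thmx.fibring}). Where you diverge is in the proof of \Cref{claim.2} itself. The paper argues geometrically: if $\varphi|_L\neq 0$ for a cell stabiliser $L$, take a codimension-$1$ subgroup $K\leq\ZZ^n\leq L$ in $\ker\varphi$, realise it as an $(n-1)$-flat $F\subset\EE^n\times X$ via the flat torus theorem, observe that commensurability of cell stabilisers (\Cref{thm.structure}) forces all $\Gamma$-translates of $F$ to be parallel, so that $\Gamma$ virtually fixes the line $F^\perp$ and hence a point of $\partial(\EE^n\times X)$; this contradicts irreducibility via \cite[Theorem~2]{CapraceMonod2019}. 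Your argument replaces this with the observation that $\varphi$, restricted to the commensurated translation lattices, descends to a $\pi_{\OO(n)}(\Gamma)$-invariant linear functional on $\RR^n$, whose orthogonal complement is an invariant line forcing some $k_j=1$ in the maximal Euclidean decomposition of \S\ref{sec.irr}. The two arguments are close cousins --- both exhibit an invariant line in $\EE^n$ --- but yours avoids the $\CAT(0)$ boundary machinery and the external Caprace--Monod citation, at the price of leaning on the precise form of the definition of weak irreducibility.

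Two minor corrections. First, the detour through $T_\Gamma=\ker\pi_{\OO(n)}$ and the ensuing worry about $K=\Gamma\cap(\{1\}\times A)$ are unnecessary: descend directly from each $T_\sigma$, noting that $T_\sigma\cap K$ lies in the finite kernel of $\Gamma_\sigma\to\{\Isom(\EE^n)\text{-lattice}\}$ (this is precisely the ``finite-by'' in \Cref{thm.structure}), so $\varphi$ kills it automatically; commensurability of the $T_\sigma$ then shows that the induced functionals on $\QQ^n$ all coincide, and conjugation gives $\pi_{\OO(n)}(\Gamma)$-invariance. No separate control of $K$ is required, and your proposed route to finiteness of $K$ is not obviously available. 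Second, weak irreducibility does \emph{not} force $\pi_{\OO(n)}(\Gamma)$ to act irreducibly on $\RR^n$ (the definition in \S\ref{sec.irr} permits $\ell>1$); what it forces is that every $k_j\geq 2$, i.e.\ that there is no invariant line --- and that is exactly the hypothesis your invariant-functional argument actually consumes.
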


\begin{proof}[Proof of \Cref{prop.isomEnxX.H1}]
Let $\varphi\in H^1(\Gamma;\ZZ)=\hom(\Gamma,\ZZ)$, $P\coloneqq \pi_{\OO(n)}(\Gamma)$, and $N\coloneqq \ker(\pi_{\OO(n)})\triangleleft\Gamma$.  For the remainder of the proof an omission of coefficients in a (co)homology functor should be taken to mean coefficients with the trivial module $\ZZ$.



\begin{claim}\label{claim.2}
Let $L$ be a cell stabiliser in the action of $\Gamma$ on $X$.  Then, $\varphi|_L=0$.
\end{claim} 

\begin{claimproof}[Proof of claim:] Suppose for contradiction $\varphi$ is non-zero on some cell stabiliser $L$ of the $\Gamma$ action on $X$.  Then, after passing to a finite index subgroup of $L$, the restriction of $\varphi$ is non-zero on some subgroup isomorphic to $\ZZ^n$.  In particular, $\varphi$ defines a codimension $1$ subgroup $K$ of $\ZZ^n$ contained in $\ker(\varphi)$.  
Let $F\coloneqq \RR\otimes K\subset X\times\EE^n$ be the $(n-1)$-dimensional flat given by the flat torus theorem.
Now, for any $g\in\Gamma$ the flats $F$ and $g\cdot F$ are parallel.  Indeed, $g\cdot F$ is setwise stabilised by $K^g$, and $K^g\cap K$ has finite index in both $K$ and $K^g$.
Thus, $\Gamma$ fixes the one-dimensional subspace $F^\perp$.  But now, it follows that $\Gamma$ is a lattice in $\Isom(\EE^1)\times\Isom(\EE^{n-1})\times A$.  We have that the boundary $\partial (\EE^1\times \EE^{n-1}\times X)$ is equal to the join $S^0\aster S^{n-2}\aster \partial X$.  Now, $\Gamma$ has an index at most $2$ subgroup which fixes the $S^0$ factor.  It follows that $\Gamma$ must be reducible by \cite[Theorem~2(v)]{CapraceMonod2019} (see also the paragraph immediately following Theorem 2 in \emph{ibid.}), contradicting our hypothesis.  Thus, $\varphi|_L=0$.  
\end{claimproof}

Let $\Sigma^{(p)}$ be a representative set of orbits of $p$-cells for the action of $\Gamma$ on $X$.  The isomorphism will follow from a computation using the $\Gamma$-equivariant spectral sequence applied to the filtration of $X$ by skeleta (see \cite[Chapter~VII.7]{Brown1982}).  This spectral sequence takes the form
\[E_1^{p,q}\coloneqq \bigoplus_{\sigma\in\Sigma^{(p)}}H^q(\Gamma_\sigma)\Rightarrow H^{p+q}(\Gamma). \]
Since we are only interested in computing $H^1(\Gamma)$, the relevant part of the $E_1$-page is given by:
\[\begin{tikzpicture}
\matrix (m) [matrix of math nodes, nodes in empty cells, nodes={minimum  width=3ex, minimum height =3ex, outer sep =0pt}, column sep =6ex, row sep=2ex]{
p  &  &   &  &    \\
1    & \bigoplus_{\sigma\in\Sigma^{(0)}} H^1(\Gamma_\sigma)  & \bigoplus_{\sigma\in\Sigma^{(1)}} H^1(\Gamma_\sigma) &  &    \\
0   & \bigoplus_{\sigma\in\Sigma^{(0)}} H^0(\Gamma_\sigma)  & \bigoplus_{\sigma\in\Sigma^{(1)}} H^0(\Gamma_\sigma)  & \bigoplus_{\sigma\in\Sigma^{(2)}} H^0(\Gamma_\sigma) &   \\
  & 0  & 1  & 2 & q\\   };
 
 \path (m-3-1) -- (m-4-1) node[midway] (x1) {};
 \path (m-3-5) -- (m-4-5) node[midway] (x2) {};
 \draw[-stealth] (x1) -- (x2);
 
 \path (m-1-1) -- (m-1-2.west) node[midway, left=0.3cm] (y1) {};
 \path (m-4-1) -- (m-4-2.west) node[midway,left=0.3cm] (y2) {};
 \draw[-stealth] (y2) -- (y1);
 
 \draw[-stealth](m-3-2) -- (m-3-2-|m-3-3.west) node[midway,above]{$d_1^{0,0}$};
 \draw[-stealth](m-3-3) -- (m-3-3-|m-3-4.west) node[midway,above]{$d_1^{1,0}$};
 \draw[-stealth](m-2-2) -- (m-2-2-|m-2-3.west) node[midway,above]{$d_1^{0,1}$};
\end{tikzpicture}\]

Using the description of $d_1$ given in \cite[Chapter~VII.8]{Brown1982} it is easy to see that $E_2^{p,0}\cong H^p(X/\Gamma)$.  Now, the group $E_\infty^{0,1}$ is the image of the sum of restrictions \[\bigoplus_{\sigma\in\Sigma^{(0)}}\res^\Gamma_{\Gamma_\sigma}\colon H^1(\Gamma)\to\bigoplus_{\sigma\in\Sigma^{(0)}}H^1(\Gamma_\sigma)\]
and so must be $0$ by \Cref{claim.2}.  Also note for dimensional reasons $E_2^{0,0}=E_\infty^{0,0}$, $E_2^{1,0}=E_\infty^{1,0}$, $E_3^{0,1}=E_\infty^{0,1}$ and $E_3^{2,0}=E_\infty^{2,0}$.  Thus, the relevant part of the $E_\infty$-page is given by:
\[\begin{tikzpicture}
\matrix (m) [matrix of math nodes, nodes in empty cells, nodes={minimum  width=3ex, minimum height =3ex, outer sep =0pt}, column sep =6ex, row sep=2ex]{
p  &  &   &  &    \\
1    & 0  & E_\infty^{1,1} &  &    \\
0   & \ZZ  & H^1(X/\Gamma)  & E^{2,0}_\infty &   \\
  & 0  & 1  & 2 & q\\   };
  
 \path (m-3-1) -- (m-4-1) node[midway] (x1) {};
 \path (m-3-5) -- (m-4-5) node[midway] (x2) {};
 \draw[-stealth] (x1) -- (x2);
 
 \path (m-1-1) -- (m-1-2.west) node[midway, left=0.3cm] (y1) {};
 \path (m-4-1) -- (m-4-2.west) node[midway,left=0.3cm] (y2) {};
 \draw[-stealth] (y2) -- (y1);

\end{tikzpicture}\]
and so the desired isomorphism $H^1(\Gamma)\cong H^1(X/\Gamma)$ follows.
\end{proof}

We say a graph of groups $\calg$ is \emph{reduced}, if given an edge $e$ with distinct end points $v_1,v_2$, the inclusions $\Gamma_{e}\rightarrowtail\Gamma_{v_i}$ are proper. We say that a graph of groups $\calg$ is not an ascending HNN-extension if it is not an HNN-extension (it has more than one edge or more than one vertex), or it is an HNN-extension but both $\Gamma_e$ and $\Gamma_{\bar{e}}$ are proper subgroups of $\Gamma_v$.

We will need the following proposition of Cashen--Levitt \cite[Proposition~2.5]{CashenLevitt2016}.

\begin{prop}[Cashen--Levitt] \label{prop.CL}
Let $\Gamma$ be the fundamental group of a finite reduced graph of groups with $\Gamma$ finitely generated.  Assume that $\Gamma$ is not an ascending HNN-extension.  If $\varphi\in\Sigma^1(\Gamma)$, then $\varphi$ is non-trivial on every edge group.
\end{prop}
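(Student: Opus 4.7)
My plan is to prove the contrapositive: if $\varphi$ vanishes on some edge group $\Gamma_e$ of $\calg$ and $\Gamma$ is not an ascending HNN-extension, then $\varphi \notin \Sigma^1(\Gamma)$. The main input I would use is the following characterization, due to Bieri--Neumann--Strebel: if $\Gamma$ acts on a simplicial tree $T$ with every edge stabilizer contained in $\ker(\varphi)$ and $\varphi \in \Sigma^1(\Gamma)$, then $T$ has a $\Gamma$-fixed end $\omega$ along which $\varphi$ is positive. Equivalently, there is a $\Gamma$-equivariant height function $h \colon T \to \RR$ satisfying $h(gv) - h(v) = \varphi(g)$, whose restriction to any $\Gamma$-invariant subtree is bounded below.

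The first step is to collapse all edges of $\calg$ on which $\varphi$ is non-trivial, producing a new finite reduced graph of groups $\calg'$ with $\pi_1(\calg') = \Gamma$ and every edge group of $\calg'$ contained in $\ker(\varphi)$. A direct check from the definitions shows that $\calg'$ is an ascending HNN-extension only if $\calg$ already is, so the non-ascending hypothesis passes to $\calg'$, and it suffices to derive a contradiction assuming $\calg'$ itself is not an ascending HNN-extension.

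Next I would apply the characterization to the Bass--Serre tree $T'$ of $\calg'$. Since all edge stabilizers of the $\Gamma$-action on $T'$ lie in $\ker(\varphi)$, the assumption $\varphi \in \Sigma^1(\Gamma)$ yields a $\Gamma$-fixed end $\omega \in \partial T'$ with an associated equivariant height function $h$. A minimal action on a tree with a fixed end but no invariant line corresponds, via Bass--Serre theory, to an ascending HNN-structure on $\Gamma$: picking any edge $e$ along a ray pointing at $\omega$, the stabilizer of the initial vertex of $e$ serves as the vertex group, and any element realizing a generator of $\varphi(\Gamma) \leq \RR$ serves as the stable letter. Combined with reducedness of $\calg'$, this identification forces $\calg'$ itself to be an ascending HNN-extension, contradicting the reduction above.

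The main obstacle I anticipate is the last step --- extracting the ascending HNN-structure from the bare existence of a $\Gamma$-fixed end. The key is that a minimal end-fixing action on a tree with no invariant line forces exactly one of the two half-trees at each edge pointing toward $\omega$ to be strictly smaller under the stabilizer of the opposite endpoint; this asymmetry is precisely what ``ascending'' encodes. Verifying that the resulting ascending HNN presentation genuinely matches the graph of groups $\calg'$ (in particular, that the single $\Gamma$-orbit of edges in $T'$ produces a single-edge quotient) is the technically delicate part, and it is exactly where the reducedness hypothesis does its work.
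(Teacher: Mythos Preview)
The paper does not prove this proposition at all: it is quoted verbatim from Cashen--Levitt \cite[Proposition~2.5]{CashenLevitt2016} and then used as a black box in the proof of \Cref{thmx.fibring}. So there is no argument in the paper to compare your proposal against.

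On the proposal itself: the overall shape --- pass to a Bass--Serre tree whose edge stabilisers lie in $\ker(\varphi)$, invoke the BNS/Brown criterion to get a $\Gamma$-fixed end, and read off an ascending HNN structure from that end --- is the standard route and is essentially what Cashen--Levitt do. The weak point is your first reduction. ``Collapse all edges on which $\varphi$ is non-trivial'' is not well defined for loop edges, and more seriously the sentence ``a direct check from the definitions shows that $\calg'$ is an ascending HNN-extension only if $\calg$ already is'' is not a direct check: collapsing can merge vertices, turn non-loop edges into loops, and enlarge vertex groups, and you give no reason why this cannot produce a single-loop quotient in which one edge inclusion has become surjective. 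The cleaner way to organise the argument (and the one Cashen--Levitt use) is to fix a \emph{single} edge $e$ with $\varphi|_{\Gamma_e}=0$ and collapse every \emph{other} edge, obtaining a one-edge splitting of $\Gamma$ over $\Gamma_e$; reducedness of the original graph then directly gives that this one-edge splitting is not ascending, and your fixed-end argument finishes the job. Your last paragraph correctly identifies this final step as the delicate one, but the earlier reduction is where the actual gap lies.
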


We are now ready to prove \Cref{thmx.fibring} from the introduction.

\begin{thmx}\label{thmx.fibring}
Let $\calt$ be a locally-finite leafless cocompact tree, not isometric to $\RR$, and let $T=\Aut(\calt)$.  Let $\Gamma$ be a uniform $(\Isom(\EE^n)\times T)$-lattice, then $\Gamma$ virtually algebraically fibres if and only if $\Gamma$ is reducible.
\end{thmx}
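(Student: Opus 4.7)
\emph{Direction $(\Leftarrow)$}: If $\Gamma$ is reducible, then by \cite[Theorem~4.2]{CapraceMonod2009b} together with the discussion in \Cref{sec.irr}, a finite-index subgroup $\Gamma'$ splits as a direct product $\Gamma_1 \times \Gamma_2$ compatibly with the $\CAT(0)$ product decomposition of $\EE^n \times \calt$. Since $\calt$ is an irreducible $\CAT(0)$ factor lying in exactly one side of the splitting, the other factor---say $\Gamma_1$---is (virtually) a uniform lattice in a non-trivial pure Euclidean subproduct, hence virtually $\ZZ^k$ for some $k \geq 1$. The composition of the projection $\Gamma' \twoheadrightarrow \Gamma_1$ with a rank-one surjection $\chi \colon \Gamma_1 \twoheadrightarrow \ZZ$ (whose kernel is virtually $\ZZ^{k-1}$) gives a character of $\Gamma'$ with kernel $\Gamma_2 \times \ker\chi$, a product of two finitely generated groups and hence finitely generated.

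\emph{Direction $(\Rightarrow)$}: I argue by contradiction. Assume $\Gamma$ is irreducible and virtually algebraically fibres; passing to a finite-index subgroup (which remains irreducible), I may assume there is a surjection $\varphi \colon \Gamma \twoheadrightarrow \ZZ$ with $\ker\varphi$ finitely generated. By \Cref{claim.2}, $\varphi$ vanishes on every cell stabiliser of the cocompact action of $\Gamma$ on $\calt$. This action yields a finite graph of groups $\calg$ with $\pi_1(\calg) = \Gamma$ whose vertex and edge groups are cell stabilisers; after reducing to a reduced graph of groups $\calg'$---which never enlarges vertex or edge groups---$\varphi$ still vanishes on every vertex and edge group of $\calg'$.

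I then trichotomise according to $\calg'$. If $\calg'$ is a single vertex, then $\Gamma$ equals that vertex group, forcing $\varphi \equiv 0$ and contradicting surjectivity. If $\calg'$ is reduced and not an ascending HNN extension, then \Cref{prop.CL} forces $\varphi \notin \Sigma^1(\Gamma)$ (since $\varphi$ vanishes on every edge group), contradicting finite generation of $\ker\varphi$. The remaining case is when $\calg'$ is an ascending HNN extension with vertex group $V$ (a cell stabiliser, hence virtually $\ZZ^n$), edge map $\phi \colon V \hookrightarrow V$, and stable letter $t$. If $\phi$ is an isomorphism, then $\Gamma = V \rtimes \ZZ$ with the $\ZZ$-action coming from the linear part of $t$'s isometry of $\EE^n$---an element of $\GL_n(\ZZ)$ conjugate to an orthogonal matrix, hence of finite order---so $\Gamma$ is virtually $\ZZ^{n+1}$ and therefore reducible, contradicting the hypothesis. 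If $\phi$ is a proper embedding, a Britton normal form calculation identifies $\ker\varphi$ with a strictly ascending union of conjugates of $V$; since each inclusion is proper, this union is not finitely generated---a final contradiction.

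The main obstacle is the ascending HNN case, which lies outside the scope of \Cref{prop.CL}. The semi-direct product sub-case must be ruled out by a $\CAT(0)$-geometric argument (the induced linear action on $V$ is forced to be of finite order, making $\Gamma$ virtually abelian and hence reducible), while the properly ascending sub-case is handled by identifying $\ker\varphi$ with a strictly ascending chain of virtually $\ZZ^n$ subgroups whose union cannot be finitely generated. A secondary bookkeeping step is verifying that reduction of the graph of groups preserves the vanishing of $\varphi$ on surviving vertex and edge groups, which holds because reduction never enlarges vertex groups.
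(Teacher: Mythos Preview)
Your proof is correct and follows the same skeleton as the paper's: use \Cref{claim.2} to see that every character vanishes on cell stabilisers, reduce the graph of groups, and invoke \Cref{prop.CL}. The genuine point of departure is how you dispose of the ascending HNN possibility.

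The paper rules out the ascending HNN case \emph{before} appealing to \Cref{prop.CL}, by a covolume argument: since the stable letter $t$ acts isometrically on $\EE^n$, the two embeddings of the edge group into the vertex group have equal index under $\pi_{\Isom(\EE^n)}$, so a strictly ascending HNN (one map an isomorphism, the other proper) is impossible, and the remaining case of both maps being isomorphisms is argued to force reducibility. You instead allow the ascending HNN case and handle it by a direct dichotomy: if both edge maps are isomorphisms, the conjugation action of $t$ on the characteristic $\ZZ^n$ inside $V$ is by a matrix in $\GL_n(\ZZ)$ conjugate to an orthogonal matrix, hence of finite order (Kronecker), making $\Gamma$ virtually abelian and therefore reducible; if the extension is properly ascending, $\ker\varphi$ is a strictly increasing union $\bigcup_{k\geq 0} t^{-k}Vt^{k}$ and hence not finitely generated.

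Your properly ascending sub-case is in fact vacuous by the paper's equal-index observation, but your direct argument is valid regardless and has the virtue of being entirely elementary (no appeal to covolume). Conversely, the paper's route is shorter once the covolume fact is in hand. Your treatment of the double-isomorphism sub-case is arguably more explicit than the paper's, which deduces reducibility from ``$\pi_{\Isom(\EE^n)}(t)$ centralises $\pi_{\Isom(\EE^n)}(\Gamma_v)$'' without spelling out why the non-centralising case cannot occur; your finite-order argument fills exactly that gap. One small point worth tightening: the $\ZZ$-action you describe is really the induced action on the characteristic translation lattice inside $V$, not on $V$ itself, but this is what your argument actually uses.
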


\begin{proof}
If $\Gamma$ is reducible, then $\Gamma$ virtually splits as $\ZZ\times\Gamma'$, where $\Gamma'$ is a $\CAT(0)$ group.  Hence, $\Gamma'$ is type $\mathsf{F}_\infty$.  In particular, $\Gamma$ virtually algebraically fibres.

We will now prove every irreducible uniform $(\Isom(\EE^n)\times T)$-lattice does not algebraically fibre, and this will prove the theorem since a finite index subgroup of an irreducible lattice is an irreducible lattice.  Now, suppose $\Gamma$ is an irreducible uniform $(\Isom(\EE^n)\times T)$-lattice.  By \Cref{thm.structure}, the group $\Gamma$ splits as a graph of $\Isom(\EE^n)$-lattices, and so is the fundamental group of a graph of groups with vertex and edge stabilisers finite-by-$\{\Isom(\EE^n)$-lattices$\}$.  

\begin{claim}$\Gamma$ splits as a reduced graph of groups and is not an ascending HNN extension.\end{claim}

\begin{claimproof}[Proof of Claim:] We may assume the graph of groups is reduced by contracting any edges with a trivial amalgam $L\ast_L K$.  Note that these contractions do not change the vertex and edge stabilisers, but may change the Bass-Serre tree (the tree will still not be quasi-isometric to $\RR$ since there are necessarily other vertices of degree at least $3$).  Since we only contract finitely many orbits of edges in a finite valence tree, the resulting tree remains locally finite.  To see the lattice remains irreducible note: First, that the projection to $\Isom(\EE^n)$ remains unchanged. Second, that the vertex stabilisers of $\Gamma$ are non-discrete in both trees.

Now for $\Gamma$ to be an ascending HNN-extension the graph $\calt/\Gamma$ must consist of a single vertex and edge.  Let $t$ be the stable letter of $\Gamma$, then $t$ acts as an isometry on $\calt\times\EE^n$ and so for any vertex stabiliser $\Gamma_v$ of $\Gamma$ acting on $\calt$, the actions of $\Gamma_v$ and $\Gamma_v^t$ on $\EE^n$ have the same covolume.  Now, covolume is multiplicative when passing to covers. In particular, under the projection $\pi_{\Isom(\EE^n)}$, the two embeddings of the projection of the edge group $\Gamma_e$ into the projection of the vertex group $\Gamma_v$ must have the same index.  Now, if $\pi_{\Isom(\EE^n)}(t)$ (virtually) centralised $\pi_{\Isom(\EE^n)}(\Gamma_v)$, then the projection $\pi_{\Isom(\EE^n)}(\Gamma)$ would be virtually abelian.  But, in this case $\Gamma$ is reducible by \cite[Theorem~2(iii)]{CapraceMonod2019}.   Thus, the two embeddings of $\pi_{\Isom(\EE^n)}(\Gamma_e)$ into the vertex group $\pi_{\Isom(\EE^n)}(\Gamma_v)$ must both have index at least $2$, yielding the claim. 
\end{claimproof}

Now, by $H^1(\Gamma;\ZZ)\otimes\RR\cong H^1(\Gamma;\RR)$ and \Cref{claim.2}, for every character $\phi\in H^1(\Gamma;\RR)$ we see that $\phi$ restricted to a vertex or edge group is zero.  Since $\Gamma$ is the fundamental group of a reduced graph of groups, is not an ascending HNN extension, and $\phi$ vanishes on every edge group, we may apply \Cref{prop.CL} to deduce that $\phi\not\in\Sigma^1(\Gamma)$.  Hence, $\Gamma$ does not (virtually) $\mathsf{F}_1$-fibre.
\end{proof}

\begin{lemma}\label{lem.reducible.E2}
    A reducible uniform lattice in $\Isom(\EE^2)\times T$ is virtually $F_m\times\ZZ^2$ for some $m\geq 2$.
\end{lemma}
\begin{proof}
    Since $\Gamma$ is reducible, by definition (at least) one of three cases holds for $\Gamma$.
    
    \noindent \textbf{Case 1:} \emph{The projection to $\Isom(\EE^2)$ is discrete}. Hence, virtually abelian.  By \cite[Theorem~2(iii)]{CapraceMonod2019}, the group $\Gamma$ splits as $\ZZ^2\times\Gamma'$ where $\Gamma'$ is a uniform $T$-lattice.  Whence, the claim.

    \noindent \textbf{Case 2:} \emph{The projection to $T$ is discrete}.  In this case $\Gamma.\Isom(\EE^2)$ is closed in $\Isom(\EE^2)\times T$ and so $\Gamma_E=\Gamma\cap\Isom(\EE^2)$ is a lattice.  But, now $\pi_{\Isom(\EE^2)}(\Gamma)$ normalises $\Gamma_E$ and so must be a virtually abelian subgroup of $\Isom(\EE^2)$.  We now conclude as in Case 1.

    \noindent \textbf{Case 3:} \emph{The group $\Gamma$ virtually preserves a product decomposition $\EE^1\times\EE^1$.}  In this case $\Gamma$ is virtually a subgroup of $\Isom(\EE^1)\times \Isom(\EE^1)\times T$ and so $\pi_{\Isom(\EE^2)}(\Gamma)$ is virtually abelian.  We now conclude as in Case 1.
\end{proof}

\begin{corx}\label{corx.fibring.dim2}
With notation as in Theorem~\ref{thmx.fibring} suppose $n=2$.  Then $\Gamma$ virtually fibres if and only if $\Gamma$ is reducible.
\end{corx}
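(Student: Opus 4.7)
The plan is to combine \Cref{thmx.fibring} with a virtual torsion-freeness statement that is special to dimension two. The forward implication is immediate: if $\Gamma$ virtually fibres, the kernel of the corresponding character is of type $\mathsf{F}$, hence of type $\mathsf{F}_1$, so $\Gamma$ virtually algebraically fibres and \Cref{thmx.fibring} yields that $\Gamma$ is reducible.

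For the converse, suppose $\Gamma$ is reducible. After passing to a finite-index subgroup, the structure theory of \Cref{sec.irr} together with the Caprace--Monod equivalence of the two notions of irreducibility lets me write $\Gamma = A \times B$ with both factors infinite. Because $n=2$, the possibilities for such a splitting are limited: up to swapping factors, either (i) $A \leq \Isom(\EE^2)$ is crystallographic and $B$ is a uniform tree lattice, or (ii) $\EE^2$ further decomposes as $\EE^1 \times \EE^1$ and $\Gamma$ virtually splits as $\ZZ \times B$ with $B$ a uniform $(\Isom(\EE^1) \times T)$-lattice.

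In case (i), after a further finite-index step $\Gamma$ is isomorphic to $\ZZ^2 \times F$ for some finitely generated free group $F$ (by Bieberbach's theorem and virtual freeness of uniform tree lattices), and the projection onto the first $\ZZ$-factor of $\ZZ^2$ has kernel $\ZZ \times F$, which is of type $\mathsf{F}$ as a product of two groups of type $\mathsf{F}$. In case (ii) I would invoke the fact that uniform $(\Isom(\EE^1) \times T)$-lattices are virtually torsion-free. Replacing $B$ with a torsion-free finite-index subgroup $B'$, the group $B'$ acts freely and cocompactly on the $2$-dimensional contractible $\CAT(0)$ complex $\EE^1 \times \calt$, so $B'$ admits a finite $K(B',1)$ and is therefore of type $\mathsf{F}$. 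Projection of $\ZZ \times B'$ onto $\ZZ$ then gives a character of a finite-index subgroup of $\Gamma$ whose kernel $B'$ is of type $\mathsf{F}$, so $\Gamma$ virtually fibres.

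The key input, and the reason the argument does not extend to $n \geq 3$, is the virtual torsion-freeness of uniform $(\Isom(\EE^1) \times T)$-lattices. In this dimension it follows from the graph-of-virtually-cyclic-groups structure supplied by \Cref{thm.structure} (the cell stabilisers in the $2$-dimensional $\CAT(0)$ action on $\EE^1 \times \calt$ are virtually $\ZZ$ in a way compatible with the edge inclusions), whereas the corresponding statement for $(\Isom(\EE^{n-1}) \times T)$-lattices with $n-1 \geq 2$ is the open \cite[Question~9.1]{Hughes2021a}.
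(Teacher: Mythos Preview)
Your forward direction matches the paper. For the converse, the paper takes a shorter route than your case split: it invokes the fact (already recorded in \Cref{sec.irr}, from \cite{CapraceMonod2019}) that every uniform $(\Isom(\EE^1)\times T)$-lattice is automatically \emph{reducible}, hence virtually $\ZZ\times F_k$. This collapses your case~(ii) into case~(i): in all cases a reducible uniform $(\Isom(\EE^2)\times T)$-lattice is virtually $\ZZ^2\times F_m$, which visibly fibres.

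Your case~(ii) reaches the right conclusion, but the justification you give for the key step---that uniform $(\Isom(\EE^1)\times T)$-lattices are virtually torsion-free because of ``the graph-of-virtually-cyclic-groups structure''---is not a complete argument as stated. A finite graph of virtually cyclic groups over virtually cyclic edge groups is not obviously virtually torsion-free without further work (such groups need not be residually finite, so one cannot simply separate the finitely many torsion conjugacy classes), and you have not supplied that work. The cleanest way to see virtual torsion-freeness here is precisely the Caprace--Monod reducibility for $n=1$: once $B$ is virtually $\ZZ\times F_k$ it is manifestly torsion-free. So the special feature of dimension two is not really ``virtual torsion-freeness from the graph of groups'' but rather that the $(\Isom(\EE^{n-1})\times T)$-factor is itself forced to be reducible when $n-1=1$; for $n-1\geq 2$ this fails, which is exactly the obstruction the paper names.
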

\begin{proof}
This follows from Theorem~\ref{thmx.fibring} and \Cref{lem.reducible.E2}.  
\end{proof}

\medskip

\section{Lattices in symmetric spaces and Salvetti complexes}\label{extending actions}

\subsection{Right-angled Artin groups}
Let $L$ be a flag complex.  We begin with a wedge of circles (made of a single vertex and edge), one for each edge of $L$, attached along a common vertex $x$.  For each edge $\{v,w\}$ in $L$ we attach a $2$-torus along the word $vwv^{-1}w^{-1}$.  Continuing inductively, for each $n\geq 2$ cell $\{v_1,\dots,v_n\}$ in $L$ we attach an $n$-torus such that the faces correspond to already attached $(n-1)$-tori.  We denote the resulting space by $X_L$ and call it the \emph{Salvetti complex} of $L$.

The fundamental group $A_L\coloneqq \pi_1(X_L)$ is the \emph{right-angled Artin group} (RAAG) on $L$.  The group has a generating set given by the vertices of $L$, \emph{the standard generators}, and the relations that two generators $v$ and $w$ commute if and only if they are joined by an edge in $L$.  The Salvetti complex $X_L$ has universal cover $\widetilde{X}_L$ which is the quintessential example of a $\CAT(0)$ cube complex.  We endow the edges of $\widetilde X_L$ with a labelling and orientation given by the labels of vertices of $L$ and call this the \emph{standard labelling and orientation}.  We will denote the isometry group of $\widetilde{X}_L$ by $S_L$ and endow it with the topology given by uniform convergence on compacta.  Note that we do not require $S_L$ to preserve the standard labelling or orientation.  We say a RAAG is \emph{irreducible} if it does not split as the direct product of two infinite subgroups.

\subsection{The plan}
Let $L$ be a flag complex on $m\geq 3$ vertices such that $L$ is not a non-trivial join.  Let $\calt_n$ denote the $n$-regular tree and let $T_n$ denote its isometry group.

We start with an irreducible uniform lattice $\Gamma$ in $H\times T_3$ where $H$ is a non-discrete isometry group of an irreducible $\CAT(0)$ space admitting a uniform lattice.  As an example one could take an irreducible uniform lattice in $\PSL_2(\RR)\times \PSL_2(\QQ_2)$.  Here $\PSL_2(\QQ_2)<T_3$ because $\PSL_2(\QQ_2)$ acts faithfully by isometries on its Bruhat--Tits tree, which in this case is the $3$-regular tree.  Such a lattice may be constructed as the $\ZZ[\frac{1}{2}]$ points of some $\SO(2,1;Q)$, where $Q$ is a quadratic form.

Our scheme is now as follows: First, using $\Gamma$, we will obtain an irreducible uniform lattice $\widetilde\Gamma \leqslant H\times T_4$ which acts on $\calt_4$ in way that preserves the standard edge labelling when thought of as the Cayley graph of $F_2$.  Note that this is the \emph{standard labelling} of $\calt_4$.  Second, we will obtain an irreducible uniform lattice $\Gamma_L\leqslant H\times S_L$ where $S_L=\Aut(\widetilde X_L)$ and $\widetilde X_L$ is the universal cover of the Salvetti complex $X_L$ that preserves the standard labelling away from edges `coming from' $\calt_4$.

\subsection{From a 3-valent to a 4-valent tree}

The following proposition achieves the first step of our scheme.  It is essentially due to Jingyin Huang (see \cite[Lemma~9.2]{Huang2018}) although we have taken the liberty to express it in a more general setting for our purposes.

\begin{prop}\label{prop.3val-4val}
   Let $H$ be a non-discrete isometry group of a proper irreducible cocompact minimal $\CAT(0)$ space. Let $\Gamma\leqslant H\times T_3$ be an irreducible uniform lattice.  Then, there exists an irreducible uniform lattice $\widetilde \Gamma$ in $H\times T_4$ such that
   \begin{enumerate}
       \item $\Gamma$ preserves the standard labelling of $\calt_4$;
       \item there is a short exact sequence $1\to F_\infty \to \widetilde \Gamma \to \Gamma \to 1$. \label{prop.3val-4val.ses}
   \end{enumerate}
\end{prop}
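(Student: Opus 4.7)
My plan is to argue via Bass--Serre theory. By \Cref{thm.structure}, the cocompactness of the $\Gamma$-action on $\calt_3$ realizes $\Gamma$ as the fundamental group of a finite graph of groups $\calg := \Gamma\backslash\calt_3$, with vertex and edge stabilizers commensurable with uniform $H$-lattices. The 3-regularity of $\calt_3$ translates into the local valence condition $\sum_{e \ni v}[\Gamma_v : \Gamma_e] = 3$ at each vertex of $\calg$, and each edge of $\calg$ comes with a prescribed pair of injections into incident vertex groups.

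The heart of the argument is to build a new graph of groups $\widetilde\calg$, together with a morphism $\widetilde\calg \to \calg$, whose Bass--Serre tree is precisely $\calt_4$. Following \cite[Lemma~9.2]{Huang2018}, the construction modifies $\calg$ at each vertex by attaching an auxiliary edge whose edge group is a carefully chosen finite-index subgroup of the vertex group, balancing the indices so that the local valence in the enlarged Bass--Serre tree rises to exactly $4$. This yields a natural 2-coloring of the edges of $\widetilde\calg$ --- ``old'' edges inherited from $\calg$ versus ``new'' edges from the modification --- which lifts to the Bass--Serre tree as the standard labelling of $\calt_4$, manifestly preserved by $\widetilde\Gamma := \pi_1(\widetilde\calg)$.

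Collapsing the new edges induces the desired surjection $\widetilde\Gamma \twoheadrightarrow \Gamma$. Its kernel is the normal closure of the stable letters of the newly introduced edges; by Bass--Serre theory this kernel acts freely on $\calt_4$ and is therefore free, and a count of its orbits on the new edges shows that it has infinite rank, producing the short exact sequence $1 \to F_\infty \to \widetilde\Gamma \to \Gamma \to 1$ asserted in (\ref{prop.3val-4val.ses}). Uniformity of $\widetilde\Gamma$ in $H \times T_4$ follows from finiteness of $\widetilde\calg$ (for cocompactness) together with the persistence of discrete cell stabilizers (for discreteness). Irreducibility is inherited from $\Gamma$: any virtual direct product decomposition of $\widetilde\Gamma$ would descend via the surjection to a splitting of $\Gamma$, contradicting our hypothesis.

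The most delicate point is the explicit construction of $\widetilde\calg$ in the second step. One must choose edge groups so that (a) the resulting Bass--Serre tree is the 4-regular tree $\calt_4$ equipped with the standard labelling (rather than some other 4-regular tree or a tree lacking the labelling), and (b) the projection to $H$ is unchanged, so that neither irreducibility nor discreteness is broken by the enlargement. This balancing act on the edge group indices is the main technical content of the proposition, and is essentially the construction carried out by Huang in \cite[Lemma~9.2]{Huang2018}.
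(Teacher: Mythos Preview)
Your overall framework --- build a new graph of groups whose Bass--Serre tree is $\calt_4$, then read off the short exact sequence and irreducibility --- is compatible with what the paper does, but the specific construction you describe does not work. Attaching one auxiliary edge at each vertex of $\calg=\Gamma\backslash\calt_3$ cannot raise the valence in the Bass--Serre tree from $3$ to $4$: a loop at $v$ with edge group of index $k$ in $\Gamma_v$ contributes $2k$ to the valence, so the increment is always even. More seriously, your ``old versus new'' $2$-colouring would give a $3$--$1$ split of edge colours at each vertex of the Bass--Serre tree, whereas the \emph{standard labelling} of $\calt_4$ (as the Cayley graph of $F_2=\langle a,b\rangle$) is a $2$--$2$ split. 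So even if one repaired the valence count, the labelling claim in part (1) would fail. Huang's actual construction (and the paper's sketch) is topological rather than an edge-attachment on the quotient: one replaces each edge of $\calt_3$ by a pair of parallel oriented $a$-edges and each vertex by an oriented $3$-cycle of $b$-edges, producing a $4$-valent labelled graph $\calg'$ on which $\Gamma$ acts by label-preserving automorphisms; $\widetilde\Gamma$ is then the group of all lifts of this action to the universal cover $\calt_4=\widetilde{\calg'}$, and $\pi_1\calg'\cong F_\infty$ gives the kernel.

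Your irreducibility argument is also not quite right: a virtual direct product decomposition of $\widetilde\Gamma$ need not descend to one of $\Gamma$, since the images of the two factors in $\Gamma$ can intersect nontrivially (the kernel $F_\infty$ does not split along the putative product). The paper instead verifies \emph{weak} irreducibility directly --- the projection to $H$ is literally unchanged, and the vertex stabilisers for the $\calt_4$-action contain those for $\calt_3$, so the projection to $T_4$ has non-discrete vertex stabilisers --- and then invokes the Caprace--Monod equivalence of the two notions for uniform $\CAT(0)$ lattices.
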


Rather than reproduce the proof, we briefly sketch the construction.  Given the $3$-regular tree $\calt_3$ we replace every edge with a pair of edges labelled $a$ and orient them in a compatible way.  We replace every vertex with an oriented $3$-cycle with each edge labelled $b$ (see \Cref{fig:3val-4val}).  The resulting graph $\calg$ is $4$-valent and any group action on $\calt_3$ gives an action on $\calg$ that is label preserving and preserves the orientation of $a$ edges.  The universal cover of $\calg$ is the $4$-regular tree $\calt_4$ so we may lift $\Gamma$ to a new group $\widetilde \Gamma$ which is an extension of $\Gamma$ by $\pi_1\calg=F_\infty$.  Our new group $\widetilde \Gamma$ clearly preserves the labelling of $\calt_4$ and the orientation of $a$-edges.  To see that $\widetilde \Gamma$ is an irreducible lattice in $H\times T_4$ note that the projection $\widetilde\Gamma\to H$ is the composition $\widetilde \Gamma\onto \Gamma \to H$ and that the set of elements acting non-trivially in a vertex stabiliser of the action on $\calt_3$ and $\calt_4$ is infinite (so both projections are non-discrete).


\begin{figure}[h]
    \centering
    \[\begin{tikzpicture}
        \node[circle,fill=black,inner sep=0pt,minimum size=5pt] at (0,0) (P1) {};
        \node[circle,fill=black,inner sep=0pt,minimum size=5pt] at (-1,1) (P2) {};
        \node[circle,fill=black,inner sep=0pt,minimum size=5pt] at (-1,-1) (P3) {};
        \node[circle,fill=black,inner sep=0pt,minimum size=5pt] at (2,1) (P5) {};
        \node[circle,fill=black,inner sep=0pt,minimum size=5pt] at (1,0) (P4) {};
        \node[circle,fill=black,inner sep=0pt,minimum size=5pt] at (2,-1) (P6) {};
        \draw (P1) -- (P2); \draw (P1) -- (P3);  \draw (P1) -- (P4); 
        \draw (P4) -- (P5); \draw (P4) -- (P6);
        
        \node at (3,0) {$\longrightarrow$};
        
        \node[circle,fill=black,inner sep=0pt,minimum size=5pt] at (5.5,0.33) (Q1a) {};
        \node[circle,fill=black,inner sep=0pt,minimum size=5pt] at (5.5,-0.33) (Q1c) {};
        \node[circle,fill=black,inner sep=0pt,minimum size=5pt] at (6,0) (Q1b) {};
        
        \node[circle,fill=black,inner sep=0pt,minimum size=5pt] at (4.33,1.33) (Q2) {};
        \node[circle,fill=black,inner sep=0pt,minimum size=5pt] at (4.33,-1.33) (Q3) {};

        \node[circle,fill=black,inner sep=0pt,minimum size=5pt] at (7.5,0) (Q4a) {};
        \node[circle,fill=black,inner sep=0pt,minimum size=5pt] at (8,0.33) (Q4b) {};
        \node[circle,fill=black,inner sep=0pt,minimum size=5pt] at (8,-0.33) (Q4c) {};
        
        \node[circle,fill=black,inner sep=0pt,minimum size=5pt] at (8.66+0.5,1.33) (Q5) {};
        \node[circle,fill=black,inner sep=0pt,minimum size=5pt] at (8.66+0.5,-1.33) (Q6) {};
        
        \path [black, bend left, -stealth] (Q1a) edge node[midway, left] {$a$} (Q2); 
        \path [black, bend left, -stealth] (Q2) edge node[midway, above] {$a$} (Q1a);
        \path [black, bend left, -stealth] (Q1c) edge node[midway, below] {$a$} (Q3); 
        \path [black, bend left, -stealth] (Q3) edge node[midway, left] {$a$} (Q1c); 

        \path [black, bend left, -stealth] (Q1a) edge node[midway, above] {$b$} (Q1b); 
        \path [black, bend left, -stealth] (Q1b) edge node[midway, below] {$b$} (Q1c); 
        \path [black, bend left, -stealth] (Q1c) edge node[midway, left] {$b$} (Q1a); 
        
        \path [black, bend left, -stealth] (Q1b) edge node[midway, above] {$a$} (Q4a); 
        \path [black, bend left, -stealth] (Q4a) edge node[midway, below] {$a$} (Q1b);

        \path [black, bend left, -stealth] (Q4a) edge node[midway, above] {$b$} (Q4b); 
        \path [black, bend left, -stealth] (Q4b) edge node[midway, right] {$b$} (Q4c); 
        \path [black, bend left, -stealth] (Q4c) edge node[midway, below] {$b$} (Q4a); 

        \path [black, bend left, -stealth] (Q4b) edge node[midway, above] {$a$} (Q5); 
        \path [black, bend left, -stealth] (Q5) edge node[midway, below] {$a$} (Q4b);
        \path [black, bend left, -stealth] (Q4c) edge node[midway, above] {$a$} (Q6); 
        \path [black, bend left, -stealth] (Q6) edge node[midway, below] {$a$} (Q4c); 
    \end{tikzpicture}\]
    \caption{An illustration of Jingyin Huang's trick to turn a $3$-regular tree into a $4$-regular graph.}
    \label{fig:3val-4val}
\end{figure}
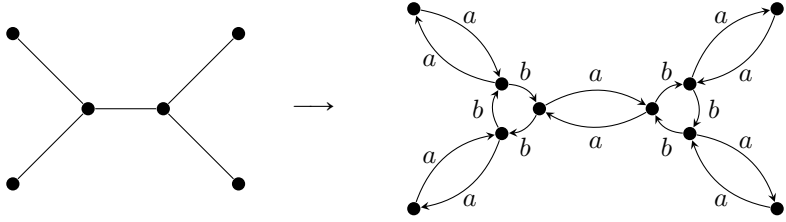

\begin{lemma}\label{lem.vtf1}
If $\Gamma$ is (virtually) torsion-free, then so is $\widetilde\Gamma$.
\end{lemma}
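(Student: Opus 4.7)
The plan is to exploit the short exact sequence from \Cref{prop.3val-4val}\eqref{prop.3val-4val.ses} together with the geometric action of $\widetilde\Gamma$ on $\calt_4$. Let $\Gamma_0\leqslant\Gamma$ be a torsion-free finite-index subgroup and let $\widetilde\Gamma_0$ be its preimage under $\widetilde\Gamma\to\Gamma$; this is finite-index in $\widetilde\Gamma$, so it suffices to show that $\widetilde\Gamma_0$ is torsion-free. Suppose for contradiction that $g\in\widetilde\Gamma_0$ is a non-trivial element of finite order.

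The crucial structural fact is that the normal subgroup $F_\infty=\pi_1\calg$ acts on $\calt_4$ as the deck group of the universal cover $\calt_4\to\calg$, and hence acts \emph{freely} on $\calt_4$. In particular, for any cell $\sigma$ of $\calt_4$, the intersection of the stabiliser $\widetilde\Gamma_\sigma$ with $F_\infty$ is trivial, so the restriction of the quotient map $\widetilde\Gamma\to\Gamma$ to $\widetilde\Gamma_\sigma$ is injective.

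Now, since $g$ has finite order and acts on the $\CAT(0)$ simplicial tree $\calt_4$, it must fix either a vertex or the midpoint of an edge which it inverts. In the first case, $g$ lies in a vertex stabiliser of $\widetilde\Gamma$, which embeds into $\Gamma$; its image lies in the torsion-free group $\Gamma_0$ and so is trivial, forcing $g=1$, a contradiction. In the second case, $g$ has order two and $g^2=1$, while $g$ fixes the midpoint of an edge $e$; the image $\bar g$ of $g$ in $\Gamma$ is torsion in $\Gamma_0$ and hence trivial, so $g\in F_\infty$, contradicting the fact that $F_\infty$ acts freely on $\calt_4$. Either way we obtain a contradiction, so $\widetilde\Gamma_0$ is torsion-free.

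The only potential subtlety is the edge-inversion case, but this is handled uniformly by passing to $g^2$ (or, alternatively, by observing that the labelled/oriented structure on $\calt_4$ coming from Huang's construction prevents edge inversions; either route works, and the $g^2$ argument is cleaner to state).
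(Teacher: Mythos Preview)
Your argument is correct, but it takes a substantial geometric detour where none is needed. The paper's proof is a single sentence: it follows from the short exact sequence $1\to F_\infty\to\widetilde\Gamma\to\Gamma\to 1$. The underlying algebraic fact is simply that an extension of a virtually torsion-free group by a torsion-free group is virtually torsion-free: if $\Gamma_0\leqslant\Gamma$ is torsion-free of finite index and $\widetilde\Gamma_0$ is its preimage, then any torsion element $g\in\widetilde\Gamma_0$ has torsion image in $\Gamma_0$, hence trivial image, so $g\in F_\infty$; since $F_\infty$ is free it is torsion-free, and thus $g=1$.

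Notice that this is exactly the reasoning you deploy in your ``second case'' (edge inversion), and it works uniformly without ever invoking the action on $\calt_4$, fixed points, or the injectivity of vertex stabilisers. Your first case is a roundabout version of the same implication: the fact that vertex stabilisers embed in $\Gamma$ is equivalent to $F_\infty$ acting freely, which in turn is just a restatement of $F_\infty$ being torsion-free. So the tree action, the case split on vertex-fixing versus edge-inversion, and the discussion of the labelled/oriented structure all add length without adding content. The clean route is the one-line algebraic argument above, which is what the paper intends by ``follows from the short exact sequence''.
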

\begin{proof}
    This follows from the short exact sequence in \Cref{prop.3val-4val}\eqref{prop.3val-4val.ses}.
\end{proof}

\subsection{From a 4-valent tree to a Salvetti complex}

The following proposition is step two of our scheme.  It is a special case of a more general construction (see~\cite[Theorem~7.4]{Hughes2021a}).  Note that this result was known to Jingyin Huang in the case that $H$ is the automorphism group of a $3$-regular tree \cite[Theorem~9.5]{Huang2018} and a related construction has appeard in the work of Camille Horbez and Jingyin Huang \cite[Proposition~4.6]{HorbezHuang2022}.

\begin{prop}\label{prop.4val-Salv}
    Let $L$ be a flag complex on $m\geq 3$ vertices such that there are $2$ vertices labelled $a$ and $b$ not joined by an edge and such that $L$ is not a non-trivial join.  Let $H$ be a non-discrete isometry group of a proper irreducible cocompact minimal $\CAT(0)$ space admitting a uniform lattice. Let $\Gamma\leqslant H\times T_4$ be an irreducible uniform lattice preserving the standard labelling $\{a,b\}$ of $\calt_4$.  Then, there exists an irreducible uniform lattice $\Gamma_L$ in $H\times S_L$ which preserves the standard labelling for all edges of $\widetilde X_L$ and the standard orientation on $\widetilde X_L$ except for edges labelled $a$ or $b$.
\end{prop}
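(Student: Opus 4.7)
The proposition is a special case of \cite[Theorem~7.4]{Hughes2021a}, and a closely related construction appears in Huang \cite[Theorem~9.5]{Huang2018} for $H=T_3$; the plan is to realise the standard-labelling action of $\Gamma$ on $\calt_4$ as the ``shadow'' of a much larger action on $\widetilde X_L$, and then form $\Gamma_L$ by a fibre-product construction.

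First I would identify $\calt_4$ with a canonical combinatorial subcomplex of $\widetilde X_L$. Since $u$ and $w$ are not joined by an edge in $L$, the subgroup $\langle u,w\rangle\leq A_L$ is free of rank two, and there is an $\langle u,w\rangle$-equivariant isometrically embedded copy of $\calt_4$ inside $\widetilde X_L$ whose edge-labelling by $\{u,w\}$ agrees with the standard labelling.  Let $P\leq S_L$ be the subgroup of label- and orientation-preserving automorphisms of $\widetilde X_L$.  The $P$-action on the parallelism classes of $u$- and $w$-hyperplanes induces a natural surjective homomorphism $\pi\colon P\twoheadrightarrow T_4^{\mathrm{lab}}$ onto the label-preserving subgroup of $T_4$, which restricts on $\langle u,w\rangle\leq A_L\leq P$ to the Cayley action on $\calt_4$.

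Next I would define
\[
    \Gamma_L \coloneqq \{(h,g)\in H\times P : (h,\pi(g))\in\Gamma\},
\]
possibly replacing $\Gamma$ and $\Gamma_L$ by finite-index subgroups.  Discreteness of $\Gamma_L$ in $H\times S_L$ reduces to showing that $\ker(\pi)$ intersects vertex stabilisers of the $A_L$-action discretely, and uniformity follows because $\Gamma$ is cocompact on $H\times\calt_4$ while the $A_L$-action extends this cocompactly over the remaining hyperplane directions corresponding to $V(L)\setminus\{u,w\}$.  Non-discreteness of the projection to $H$ is inherited from $\Gamma$; non-discreteness of the projection to $S_L$ follows because $\Gamma_L$ contains $\pi^{-1}(\mathrm{stab}_\Gamma(v))$ for each vertex $v$ of $\calt_4$, and this preimage is infinite as soon as $m\geq 3$, since it meets the hyperplane stabilisers in $A_L$ coming from the extra vertices of $L$.

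The main obstacle, and the place where all the hypotheses on $L$ enter, is verifying algebraic irreducibility of $\Gamma_L$.  The assumption that $L$ is not a non-trivial join ensures $\widetilde X_L$ is irreducible as a $\CAT(0)$ space, so a putative finite-index direct splitting of $\Gamma_L$ either projects to a direct splitting of a finite-index subgroup of $\Gamma$, contradicting the irreducibility of $\Gamma$, or has a factor entirely contained in the vertical kernel $\ker(\mathrm{id}_H\times\pi)$.  To rule out the latter case I would appeal to \cite[Theorem~4.2]{CapraceMonod2009b} together with the structural result \Cref{thm.structure}, which forces any such splitting to respect the graph-of-lattices decomposition of $\widetilde X_L/\Gamma_L$ and is incompatible with $L$ being an irreducible flag complex on $m\geq 3$ vertices.
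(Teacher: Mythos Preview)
There is a genuine gap in the fibre-product step. Once you insist that $P\leq S_L$ consists of label- \emph{and} orientation-preserving automorphisms of $\widetilde X_L$, the group $P$ collapses to $A_L$ itself: any $g\in P$ fixing a vertex must fix every outgoing edge of each label at that vertex, hence fixes the star, and by connectedness $g=\mathrm{id}$; since $A_L\leq P$ already acts simply transitively on vertices, $P=A_L$. Consequently your map $\pi\colon P\to T_4^{\mathrm{lab}}$ has image exactly $F_2=\langle u,w\rangle$, a \emph{discrete} subgroup of $T_4^{\mathrm{lab}}$, and is certainly not surjective. But $\Gamma$ is irreducible, so its projection to $T_4$ is non-discrete and therefore not contained in $F_2$. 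Your fibre product $\Gamma_L=\{(h,g):(h,\pi(g))\in\Gamma\}$ then only sees the slice $\Gamma\cap(H\times F_2)$, which is far too small to be a uniform lattice in $H\times S_L$. The phrase ``parallelism classes of $u$- and $w$-hyperplanes'' does not rescue this: whatever natural map one writes down from $A_L$ to automorphisms of $\calt_4$, its image is still the Cayley action of $F_2$.

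The paper's construction avoids this by never asking for a map from a subgroup of $S_L$ down to $T_4$. Instead it passes through the intermediate cover $Y=\widetilde X_L/\ker(A_L\twoheadrightarrow F_2)$, whose $1$-skeleton is $\calt_4$ with a bouquet of circles (one for each $v\neq u,w$) attached at every vertex. The label-preserving action of $\Gamma$ on $\calt_4$ extends tautologically to an action on $Y$, and $\Gamma_L$ is then defined as the group of \emph{all lifts} of this action to the universal cover $\widetilde X_L\to Y$. This yields the short exact sequence $1\to\ker(A_L\to F_2)\to\Gamma_L\to\Gamma\to 1$, from which discreteness and cocompactness are immediate. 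The point is that lifting through a normal cover produces many automorphisms of $\widetilde X_L$ that are \emph{not} orientation-preserving in your sense, and these are exactly what is needed to cover the non-discrete projection of $\Gamma$ to $T_4$. Your irreducibility argument is also more involved than necessary: once one has the short exact sequence, the projection to $H$ is literally unchanged, and the $\calt_4$-vertex stabilisers of $\Gamma$ reappear as $\widetilde X_L$-vertex stabilisers of $\Gamma_L$, so both projections remain non-discrete; there is no need to analyse a putative direct-product splitting.
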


We sketch the construction of the proposition for the benefit of the reader.  Let $V=\{a,b\}\subset VL$ be a proper subset.  Consider $\pi\colon A_L\twoheadrightarrow A_V=F_2$ given by $a\mapsto a$, $b\mapsto b$ and $v\mapsto 1$ for any $v\in VL\backslash V$.  This has kernel $\ker(\pi)$ and covering space $\widetilde Y\to X_L$.  We may identify the vertex set of $\calt_{4}$ with the vertex set of $Y$ via the embedding of $\calt_{4}\rightarrowtail Y$ given by `unwrapping' the $S^1\bigvee S^1 =X_V\subset X_L$ corresponding to the vertices $a$ and $b$.  The $1$-skeleton $Y^{(1)}$ of $Y$ is obtained from $\calt_{4}$ by attaching to each vertex of $\calt_{4}$ a circle for each $v\in VL$ not equal to $a$ or $b$.

Now, $\Gamma$ acts by label-preserving isometries on $\calt_{4}$.  It follows $\Gamma$ acts by label-preserving isometries on $Y$ and preserves the standard orientation of any edge with label not equal to $a$ or $b$ (see \cite[Proposition~4.6]{HorbezHuang2022}).  The group $\Gamma_L$ is then defined to be the lifts of all elements of $\Gamma$ to $S_L$.  That is, we have a short exact sequence
\begin{equation}\label{ses.4val-Salv}\begin{tikzcd}
1 \arrow[r] & \ker(\pi) \arrow[r] & \Gamma_L \arrow[r] & \Gamma \arrow[r] & 1.
\end{tikzcd} \end{equation}

The hypothesis on $L$ ensure that $\widetilde X_L$ is an irreducible $\CAT(0)$ space.  Thus, to see $\Gamma_L$ is irreducible it is enough to notice that the projection to $H$ is exactly the composition $\Gamma_L\onto \Gamma\to H$ and that the vertex stabilisers of $\Gamma$ acting on $\calt_4$ are also vertex stabilisers for $\Gamma_L$ acting on $\widetilde X_L$.  In particular, since they were non-discrete when projected to $T_4$, they are non-discrete when projected to $S_L$.

\begin{remark}\label{rem.orientable.2}
    If the set-wise stabiliser of an edge $e$ of $\calt_4$ in $\Gamma$ equals its point-wise stabiliser, then the same holds true for a lift of $e$ in $\widetilde X_L$ and its stabiliser in $\Gamma_L$.
\end{remark}

\begin{lemma}\label{lem.vtf2}
    If $\Gamma$ is (virtually) torsion-free, then so is $\Gamma_L$.
\end{lemma}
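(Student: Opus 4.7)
The plan is to exploit the short exact sequence \eqref{ses.4val-Salv} together with the torsion-freeness of subgroups of right-angled Artin groups. Start by choosing a torsion-free finite-index subgroup $\Gamma'\leqslant\Gamma$, which exists by hypothesis. Let $\Gamma'_L$ denote its preimage in $\Gamma_L$ under the surjection $\Gamma_L\twoheadrightarrow\Gamma$. Then $\Gamma'_L$ has finite index in $\Gamma_L$ (equal to the index of $\Gamma'$ in $\Gamma$), so it suffices to show $\Gamma'_L$ is torsion-free.

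Restricting the extension \eqref{ses.4val-Salv} yields a short exact sequence
\[
1\longrightarrow \ker(\pi)\longrightarrow \Gamma'_L\longrightarrow \Gamma'\longrightarrow 1.
\]
The kernel $\ker(\pi)$ is a subgroup of the right-angled Artin group $A_L$, and RAAGs are torsion-free (they act freely cocompactly on the $\CAT(0)$ cube complex $\widetilde{X}_L$). Since $\Gamma'$ was chosen torsion-free, both outer terms of the sequence are torsion-free.

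Finally, I invoke the standard fact that an extension of a torsion-free group by a torsion-free group is torsion-free: if $g\in\Gamma'_L$ had finite order, then its image in $\Gamma'$ would have finite order and hence be trivial, so $g$ would lie in $\ker(\pi)$; but $\ker(\pi)$ is torsion-free, forcing $g=1$. Therefore $\Gamma'_L$ is torsion-free, and $\Gamma_L$ is virtually torsion-free. There is no real obstacle here; the argument is essentially formal once one notes that the kernel in \eqref{ses.4val-Salv} sits inside a RAAG.
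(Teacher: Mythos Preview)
Your proof is correct and is precisely the argument the paper has in mind; the paper's own proof is the single line ``This follows immediately from the short exact sequence~\eqref{ses.4val-Salv},'' and you have simply unpacked that sentence. The key observations---that $\ker(\pi)\leqslant A_L$ is torsion-free and that extensions of torsion-free groups by torsion-free groups are torsion-free---are exactly what is being invoked.
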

\begin{proof}
    This follows immediately from the short exact sequence \eqref{ses.4val-Salv}.
\end{proof}

\section{Fibrations over the circle}\label{sec.irr.fibring}

\subsection{Filtrations}
Let $\Gamma$ be a group and $X$ be a $\Gamma$-CW complex.  We say $X$ is \emph{$n$-good} if
\begin{enumerate}
    \item $X$ is $n$-acyclic, i.e. $\widetilde{H}_k(X)=0$ for $k\leq n$;
    \item for $0\leq p \leq n$, the stabiliser $\Gamma_\sigma$ of any $p$-cell $\sigma$ is of type $\rm{FP}_{n-p}$.
\end{enumerate}
A \emph{filtration} of $X$ is a family $\{X_\alpha\}_{\alpha\in I}$ of $\Gamma$-invariant subcomplexes such that $I$ is a directed set, $X_\alpha\subseteq X_\beta$ when $\alpha\leq\beta$, and $X=\bigcup_{\alpha}X_\alpha$.  The filtration is of \emph{finite $n$-type} if the $X_\alpha/\Gamma$ have finite $n$-skeleton.  We say that $\{X_\alpha\}$ is \emph{$\widetilde{H}_k$-essentially trivial} (resp. \emph{$\pi_k$-essentially trivial}) if for each $\alpha$ there is $\beta\geq\alpha$ such that $\widetilde{H}_k(\ell_{\alpha,\beta})=0$ (resp. $\pi_k(\ell_{\alpha,\beta})=0$), where $\ell_{\alpha,\beta}\colon X_\alpha\rightarrowtail X_\beta$ is the inclusion.

We will make use of the two criteria due to Brown.

\begin{thm}\emph{\cite{Brown1987}} \label{browns.crit.classic.fp}
Let $X$ be an $n$-good $\Gamma$-complex with a filtration $\{X_\alpha\}$ of finite $n$-type.  Then $\Gamma$ is of type $\mathsf{FP}_n$ if and only if the directed system $\{X_\alpha\}$ is $\widetilde{H}_k$-essentially trivial for all $k<n$.\hfill\qed
\end{thm}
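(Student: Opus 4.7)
The theorem is classical, due to Brown \cite{Brown1987}, so my plan mirrors his original strategy: work with the equivariant spectral sequence associated to the cellular filtration of each $X_\alpha$, and pass to colimits along $\{X_\alpha\}$. The setup is that the cellular chain complex $C_\ast(X_\alpha)$ is a complex of $\mathbb{Z}\Gamma$-modules with $C_p(X_\alpha) = \bigoplus_{\sigma \in \Sigma_\alpha^{(p)}} \mathbb{Z}[\Gamma/\Gamma_\sigma]$, where $\Sigma_\alpha^{(p)}$ is a set of orbit representatives of $p$-cells. Resolving each $\mathbb{Z}[\Gamma/\Gamma_\sigma]$ by a $\mathbb{Z}\Gamma$-projective resolution (of length $n-p$, available by the $\FP_{n-p}$ hypothesis on stabilisers) and totalizing yields a double complex whose first spectral sequence reads
\[E^1_{p,q} = \bigoplus_{\sigma \in \Sigma_\alpha^{(p)}} H_q(\Gamma_\sigma;\mathbb{Z}) \;\Longrightarrow\; H_{p+q}^\Gamma(X_\alpha).\]
The finite-$n$-type hypothesis makes $|\Sigma_\alpha^{(p)}|<\infty$ for $p\leq n$, and combined with $n$-goodness this makes every $E^1_{p,q}$ with $p+q\leq n$ finitely generated. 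Passing to the colimit over $\alpha$ and using $n$-acyclicity of $X$ identifies the limit with $H_\ast(\Gamma)$ in degrees $\leq n$.

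For the reverse implication (essentially trivial $\Rightarrow \FP_n$), I would directly assemble a partial projective resolution of $\mathbb{Z}$. For each $p\leq n$ and each $\sigma \in \Sigma_\alpha^{(p)}$, choose a finitely generated partial projective resolution $Q_{\sigma,\ast}\to \mathbb{Z}[\Gamma/\Gamma_\sigma]$ of length $n-p$, and totalize the double complex placing $\bigoplus_\sigma Q_{\sigma,q}$ in bidegree $(p,q)$; this produces a complex of finitely generated projective $\mathbb{Z}\Gamma$-modules in degrees $\leq n$. Essential triviality of $\{\widetilde{H}_k(X_\alpha)\}$ in degrees $k<n$, combined with $n$-acyclicity, ensures that after replacing $\alpha$ by a large enough $\beta$ the low-degree homology of this totalization is $\mathbb{Z}$ in degree $0$ and vanishes in degrees $1\leq k\leq n$, giving the required partial projective resolution of $\mathbb{Z}$.

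For the forward implication ($\FP_n \Rightarrow$ essentially trivial), I would use a finite-type $\FP_n$-resolution $P_\ast \to \mathbb{Z}$ as a universal lifting device. From the spectral sequence above one first extracts that $\widetilde{H}_k(X_\alpha)$ is a finitely generated $\mathbb{Z}\Gamma$-module for $k<n$. Since $X$ is $n$-acyclic, each generator is bounded by a chain in $C_{k+1}(X)$ supported on finitely many cells, hence living in some $X_{\beta_i}$; taking $\beta$ above the finitely many $\beta_i$ produces a single $\beta\geq\alpha$ with $\widetilde{H}_k(\ell_{\alpha,\beta})=0$. The main obstacle is precisely the finite-generation step: combining the $\FP_n$-assumption on $\Gamma$ with the finite-$n$-type hypothesis to conclude that $\widetilde{H}_k(X_\alpha)$ is finitely generated over $\mathbb{Z}\Gamma$ requires careful bookkeeping in the spectral sequence and a comparison (via the five-lemma applied along the filtration) between $H_\ast^\Gamma(X_\alpha)$ and $H_\ast(\Gamma)$.
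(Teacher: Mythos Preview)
The paper does not give a proof of this statement; it is quoted from \cite{Brown1987} with a terminal \qed and used as a black box in the sequel, so there is no argument in the paper to compare yours against.

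Your outline tracks Brown's original strategy. The one place that would require real work is exactly where you flag it: in the forward implication, the assertion that $\widetilde H_k(X_\alpha)$ is finitely generated over $\ZZ\Gamma$ does not come out of the spectral sequence you wrote down. That spectral sequence converges to the \emph{equivariant} homology $H^\Gamma_{p+q}(X_\alpha)$, and its $E^1$-terms are finitely generated only as abelian groups; since $\ZZ\Gamma$ is typically non-Noetherian, the cycle module $Z_k(X_\alpha)\subseteq C_k(X_\alpha)$ need not inherit finite generation from $C_k(X_\alpha)$, so your ``each generator bounds in some $X_{\beta_i}$'' step is not yet justified. Brown avoids this by staying at the chain level with the same totalized complex $P_\ast$ of finitely generated projectives you assembled for the reverse direction (quasi-isomorphic to $C_\ast(X_\alpha)$ in degrees below $n$), and comparing augmentation-preserving chain maps from $P_\ast$ and from a finite $\FP_n$-resolution of $\ZZ$ into $C_\ast(X)$; the relevant chain homotopies are supported on finitely many cells and hence live in some $C_\ast(X_\beta)$. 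Your five-lemma plan can be made to work as well, but it is heavier bookkeeping than the chain-level comparison.
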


\begin{thm}\emph{\cite{Brown1987}} \label{browns.crit.classic.f}
Let $X$ be a simply connected $\Gamma$-complex such that the vertex stabilisers are finitely presented and the edge stabilisers are finitely generated.  Let $\{X_\alpha\}$ be a filtration of $X$ of finite $2$-type and let $v\in\bigcap X_\alpha$ be a basepoint.  If $\Gamma$ is finitely generated, then $\Gamma$ is finitely presented if and only if $\{(X_\alpha,v)\}$ is $\pi_1$-essentially trivial.\hfill\qed
\end{thm}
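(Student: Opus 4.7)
The plan is to exploit the orbispace fundamental group interpretation $\Gamma = \pi_1^{\mathrm{orb}}(X/\Gamma)$ together with a direct-limit manipulation driven by the filtration. Since $X$ is simply connected and $\Gamma$ acts with finitely presented vertex stabilisers and finitely generated edge stabilisers, I would first attach to each index $\alpha$ a finitely presented group $\Gamma_\alpha$ obtained from an equivariant presentation based on the finite $2$-skeleton of $X_\alpha/\Gamma$: one generator per orbit of $1$-cell together with generators of orbit-representative vertex and edge stabilisers, and relations given by the defining relations of the vertex stabilisers, the compatibility of edge-stabiliser inclusions under $1$-cell identifications, and the closing-up of each orbit of $2$-cells. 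Inclusions of filtration levels induce compatible maps $\Gamma_\alpha \to \Gamma_\beta \to \Gamma$, and a Seifert--van Kampen-style argument for the orbit complex, combined with simple connectivity of $X$, identifies $\Gamma = \varinjlim \Gamma_\alpha$.

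For the forward direction, assume $\Gamma$ is finitely presented by $\langle S \mid R\rangle$. Finite generation of $\Gamma$ together with $\Gamma = \varinjlim \Gamma_\alpha$ gives some $\alpha_0$ with $\Gamma_{\alpha_0} \onto \Gamma$. For any $\alpha \ge \alpha_0$, pick a finite generating set of $\pi_1(X_\alpha, v)$, finite by the finite $2$-type hypothesis combined with finitely generated vertex stabilisers. Each such generator maps to an element of $\ker(\Gamma_\alpha \to \Gamma)$, since a loop in $X_\alpha$ is null-homotopic in $X$ by simple connectivity; this element therefore lifts to a word in the normal closure of the image of $R$ in $\Gamma_\alpha$. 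As $R$ is finite, the witnesses (discs in $X$ bounding each generator of $\pi_1(X_\alpha, v)$) are all supported in some $X_\beta$, yielding $\pi_1(\ell_{\alpha,\beta}) = 0$. For the converse, assume the filtration is $\pi_1$-essentially trivial and $\Gamma$ is finitely generated. Pick $\alpha$ with $\Gamma_\alpha \onto \Gamma$. The $\pi_1$-essential triviality translates algebraically, via the complex-of-groups presentation, into stabilisation of the kernels: some $\beta \ge \alpha$ has $\ker(\Gamma_\alpha \to \Gamma) = \ker(\Gamma_\alpha \to \Gamma_\beta)$. Since $\Gamma_\beta$ is finitely presented and $\Gamma$ is its quotient by a normally finitely generated subgroup (the finitely many new relations contributed by cells of $X_\beta/\Gamma$ outside $X_\alpha/\Gamma$), $\Gamma$ is finitely presented.

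The main obstacle is the foundational step of rigorously constructing the $\Gamma_\alpha$ with the correct direct-limit property. Setting this up properly, identifying $\Gamma$ as the fundamental group of a complex of groups in the sense of Haefliger or via an explicit equivariant van Kampen theorem, is where the stabiliser hypotheses are genuinely used, and where particular care is required when vertex stabilisers carry torsion. Once this correspondence is in place, both directions reduce to direct-limit manipulations translating ``loops bounding in $X_\beta$'' into ``relations holding in $\Gamma_\beta$''.
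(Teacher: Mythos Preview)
The paper does not prove this statement at all: it is quoted verbatim from Brown's 1987 paper and marked with a terminal $\qed$, functioning purely as a black-box input to the later computations in Section~5. So there is no proof in the paper to compare your proposal against.

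That said, your sketch is broadly in the spirit of Brown's original argument, but a few of the translations you make between the topological and algebraic sides are not quite right. The assertion that $\pi_1$-essential triviality gives $\ker(\Gamma_\alpha\to\Gamma)=\ker(\Gamma_\alpha\to\Gamma_\beta)$ is too strong as stated: what you actually get is that the image of $\pi_1(X_\alpha,v)$ in $\Gamma_\alpha$ normally generates $\ker(\Gamma_\alpha\to\Gamma)$, and $\pi_1$-essential triviality kills these normal generators in $\Gamma_\beta$, so the kernel of $\Gamma_\alpha\to\Gamma$ is contained in the kernel of $\Gamma_\alpha\to\Gamma_\beta$. That suffices, but it is not the equality you wrote. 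Similarly, in the forward direction your invocation of ``the normal closure of the image of $R$ in $\Gamma_\alpha$'' is muddled, since $R$ lives in a free group mapping to $\Gamma$, not to $\Gamma_\alpha$; the clean statement is that once $\Gamma_{\alpha_0}\twoheadrightarrow\Gamma$ and $\Gamma$ is finitely presented, the kernel is normally finitely generated, and each normal generator dies at some finite stage, hence all of them die at a common $\beta$. You correctly flag the foundational step (building the $\Gamma_\alpha$ and identifying $\varinjlim\Gamma_\alpha\cong\Gamma$) as the real content; this is exactly where Brown works, and where the hypotheses on stabilisers are used.
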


\subsection{Computations}
Let $L$ be a flag complex on $m\geq 3$ vertices such that at least $2$ vertices $a,b$ are not joined by an edge and such that $L$ is not a non-trivial join.

Let $H$ be a non-discrete isometry group of a proper irreducible cocompact minimal $\CAT(0)$ space admitting a uniform lattice. Let $\Gamma\leqslant H\times T_3$ be an irreducible uniform lattice.  Let $\Gamma_L$ denote the lattice in $H\times S_L$ obtained by sequentially applying \Cref{prop.3val-4val} and \Cref{prop.4val-Salv}.  Note that $\Gamma_L$ preserves the standard labelling of $\widetilde X_L$ and by \Cref{rem.orientable.2} preserves the standard orientation of edges of $\widetilde X_L$ except those labelled by $a$ or $b$. 

Recall $A_L$ is the RAAG on $L$.  The standard generators of $A_L$ determine, on $\widetilde X_L$, an edge labelling $E(\widetilde{X}_L)\to\calv\coloneqq \{v_1,\dots, v_{m-2},a,b\}$.  Here, $a$ and $b$ are the vertices in the construction of $\Gamma_L$.   Each character $\phi\colon A_L\to \RR$ determines a height function on $\widetilde X_L$ which can be described by identifying the Cayley graph of $A_L$ with the $1$-skeleton of $\widetilde X_L$ and assigning real numbers to the edge labels $\calv$, thought of as signed lengths (with respect to the standard orientation of $\widetilde X_L$).  

\begin{claim}
    If $\phi$ vanishes on $a$ and $b$, then $\phi$ determines an element $\psi\in\hom(\Gamma_L,\RR)$.  Moreover, if $\phi\in\hom(A_L;\ZZ)$, then $\psi\in \hom(\Gamma_L;\ZZ)$.
\end{claim} 
\begin{claimproof}[Proof of Claim]


The action of $\Gamma_L$ on $\widetilde X_L$ preserves the labelling of edges and orientation of edges that are not labelled by $a$ or $b$, so it preserves the signed lengths of edge paths in $\widetilde X_L$.  Hence, it preserves the level in $\widetilde X_L$ for the height function induced by $\phi$, and it preserves the signed distances between level sets.  The action of $\Gamma_L$ on the collection of level sets induces $\psi\in\hom(\Gamma_L;\RR)$.  The moreover is clear.
\end{claimproof}

Suppose now that $\phi$ (and hence $\psi$) is an integral character.  The characters induce a height function $h\colon\widetilde X_L\to \RR$ with vertices taking integer values. The explicit details of this are not needed so we defer the interested reader to \cite{BuxGonzalez1999}, specifically Remark 10.  The important part for us is that both groups $A_L$ and $\Gamma_L$ act on $\widetilde{X}_L$ cocompactly and either freely in the first case or with $\CAT(0)$ stabilisers in the second case (see \Cref{thm.structure}).  Both $\ker(\phi)$ and $\ker(\psi)$ act cocompactly on level sets of the induced height function $h$.  Here a level set is the preimage of a compact connected subset of $\RR$.

Now, the stabilisers of cells in the level set are exactly the stabilisers of $\Gamma_L$ that fix a cell in the level set.  Indeed, these subgroups of $\Gamma_L$ are elliptic on $\widetilde X_L$, and so by definition of $\psi$ are contained in $\ker(\psi)$.  It follows that the stabilisers of the action on the level sets are type $\mathsf{F}_\infty$ (in fact they are type $\mathsf{F}$ if $\Gamma_L$ is torsion-free). Thus, the hypotheses of Brown's criteria are satisfied for both $\ker(\varphi)<A_L$ and $\ker(\psi)<\Gamma_L$ when acting on level sets of the height function $h$.  We have almost proved the following theorem:

\begin{thm}\label{thm.fibring.construction}
    Let $L$ be a flag complex with vertices $\{v_1,\dots,v_{n-2},a,b\}$ and suppose $a$ and $b$ do not span an edge. 
 Let $\phi\in H^1(A_L;\ZZ)$ vanish on $a$ and $b$ and let $\psi$ be the corresponding character in $H^1(\Gamma_L;\ZZ)$.  
 Then, $\ker(\phi)$ is type $\mathsf{FP}_n$ (resp. type $\mathsf{F}_n$) if and only if  $\ker(\psi)$ is type $\mathsf{FP}_n$ (resp. type $\mathsf{F}_n$).  
 Moreover, if $\Gamma_L$ is torsion-free then $\ker(\phi)$ is type $\mathsf{FP}$ (resp. $\mathsf{F}$) if and only if  $\ker(\psi)$ is type $\mathsf{FP}$ (resp. type $\mathsf{F}$).
\end{thm}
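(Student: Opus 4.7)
The plan is to apply Brown's criteria (Theorems~\ref{browns.crit.classic.fp} and \ref{browns.crit.classic.f}) to both $\ker(\phi)$ and $\ker(\psi)$ acting on the same complex with the same filtration, and observe that the essential-triviality hypotheses depend only on the filtered space, not on which of the two groups acts.

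More precisely, first I would fix the height function $h\colon\widetilde{X}_L\to\RR$ defined (up to additive constant) on vertices by $h(gx)=d_L(x,gx)$; this is the Morse function realising both $\phi$ and $\psi$ simultaneously, since by construction the orbit map $\Gamma_L\to\widetilde{X}_L$, $g\mapsto gx$, respects the labelling and $\psi$ was defined as the induced signed length. Pick a regular level $Y\coloneqq h^{-1}(r)$ of the associated Morse function on $\widetilde{X}_L$ (equivalently, pass to the standard Bestvina--Brady--Bux--Gonzalez ascending and descending filtrations $\{Y_\alpha\}$ of $Y$ obtained by intersecting with $h^{-1}([r-\alpha,r+\alpha])$). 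Both $\ker(\phi)<A_L$ and $\ker(\psi)<\Gamma_L$ act on $Y$ cocompactly, and the filtration $\{Y_\alpha\}$ is invariant under both actions because $\phi$ and $\psi$ determine the same height function.

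Next I would verify the stabiliser hypotheses. For $A_L$ the action on $\widetilde{X}_L$ is free so the action of $\ker(\phi)$ on $Y$ is free, and the $n$-good condition is immediate. For $\Gamma_L$, any cell stabiliser in the action on $\widetilde{X}_L$ is elliptic, hence by definition of $\psi$ lies in $\ker(\psi)$; moreover by \Cref{thm.structure} these stabilisers are finite-by-$\{H$-lattices$\}$, so in particular of type $\mathsf{F}_\infty$. This shows $Y$ with the $\Gamma_L$-action is $n$-good for all $n$ and that the filtration $\{Y_\alpha\}$ is of finite $n$-type for every $n$ under both actions. So the hypotheses of Brown's criteria are satisfied in both cases.

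Applying \Cref{browns.crit.classic.fp}, the group $\ker(\phi)$ is $\mathsf{FP}_n$ iff $\{Y_\alpha\}$ is $\widetilde{H}_k$-essentially trivial for all $k<n$, and the same equivalence holds for $\ker(\psi)$. Since the essential-triviality condition is a property of the directed system of spaces alone, the two equivalences yield $\ker(\phi)$ is $\mathsf{FP}_n$ iff $\ker(\psi)$ is $\mathsf{FP}_n$. The $\mathsf{F}_n$ version requires a little more care: for $n\geq 2$ one combines $\mathsf{FP}_n$ with finite presentation, and finite presentation is handled by \Cref{browns.crit.classic.f} using $\pi_1$-essential triviality, which again is an invariant of the filtered space; finite generation is automatic from cocompactness. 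For the ``moreover'' statement, if $\Gamma_L$ is torsion-free then the finite-by-lattice cell stabilisers are actually torsion-free $\CAT(0)$ lattices (hence of type $\mathsf{F}$), so the equivalence persists to the $n=\infty$ case. The main subtlety I expect to navigate carefully is making the stabiliser ellipticity argument airtight so that the $\ker(\psi)$-stabilisers on $Y$ really coincide with the ambient $\Gamma_L$-stabilisers; everything else is a direct translation between the two group actions on the same filtered complex.
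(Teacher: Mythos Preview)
Your proposal is correct and follows essentially the same route as the paper: both proofs apply Brown's criteria (\Cref{browns.crit.classic.fp} and \Cref{browns.crit.classic.f}) to $\ker(\phi)$ and $\ker(\psi)$ acting on $\widetilde{X}_L$ with the common Bux--Gonzalez filtration induced by the shared height function, observe that the cell stabilisers for $\Gamma_L$ are elliptic (hence lie in $\ker(\psi)$) and of type $\mathsf{F}_\infty$ by \Cref{thm.structure}, and conclude since essential triviality depends only on the filtered space. One small slip: your $\{Y_\alpha\}$ should be a filtration of $\widetilde{X}_L$ by the slabs $h^{-1}([r-\alpha,r+\alpha])$, not of the single level set $Y$, but this is clearly what you intend and does not affect the argument.
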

\begin{proof}
    The case of $\mathsf{FP}_n$ follows from \Cref{browns.crit.classic.fp}, the case of $\mathsf{F}_n$ follows from \Cref{browns.crit.classic.f} and the classical fact that a finitely presented group being type $\mathsf{FP_n}$ is in fact type $\mathsf{F}_n$ \cite[VIII.7 Ex. 1]{Brown1982}. We now prove the moreover.  If $\Gamma_L$ is torsion-free, then $\cd(\Gamma_L)$ is finite, hence so is $\cd(\ker(\psi))$. Suppose $\ker(\phi)$ is type $\mathsf{FP}$. Then, $\ker(\psi)$ is type $\mathsf{FP}_n$ for all $n$, that is type $\mathsf{FP}_\infty$.  Thus, by \cite[VIII(6.1)]{Brown1982}, $\ker(\psi)$ is type $\mathsf{FP}$.  Finally, if $\ker(\phi)$ is type $\mathsf{F}$, then $\ker(\psi)$ is type $\mathsf{FP}$, type $\mathsf{F}_n$ for all $n$, and admits a finitely dominated Eilenberg--Maclane space \cite[VIII(6.1)]{Brown1982}.  It remains to show that Wall's finiteness obstruction in $\widetilde K_0(\ZZ \ker\psi)$ vanishes \cite{Wall1965,Wall1966}.  Now, since $\CAT(0)$ groups satisfy the Farrell--Jones Conjecture (in dimension $0$) \cite{BartelsLuck2012} and the conjecture is closed under taking subgroups (loc. cit.), we see that $\ker(\psi)$ satisfies the Farrell--Jones Conjecture (in dimension $0$) .  In particular, as $\ker(\psi)$ is torsion-free, we have $\widetilde K_0(\ZZ \ker\psi)=0$.  Hence, $\ker(\psi)$ admits a finite classifying space as required.  In each case the converse follows by reversing the argument.
\end{proof}

The previous theorem is easy to apply because the BNSR invariants of RAAGs are known \cite{BestvinaBrady1997,MeierMeinertVanWyk1998,BuxGonzalez1999}.  We reproduce the result here for the convenience of the reader.

Let $L$ be a flag complex with RAAG $A_L$.  Each vertex of $L$ corresponds to a standard generator of $A_L$.  Given a character $\phi\colon A_L\to\RR$, let $L^\dagger$ denote the full subcomplex of $L$ spanned by vertices $v$ such that $\phi(v)=0$, and let $L^\ast$ denote the full subcomplex of $L$ spanned by vertices $v$ such that $\phi(v)\neq0$.

\begin{thm}[Bestvina--Brady, Meier--Meinert--VanWyk, Bux--Gonzalez]\label{thm.BBL.crit}
Let $L$ be a flag complex and let $\phi\in H^1(A_L;\RR)$.  The following are equivalent:
\begin{enumerate}
    \item $\ker\phi$ is type $\mathsf{FP}_{n+1}(\Z)$, resp. $\ker\phi$ is type $\mathsf{F}_{n+1}$;
    \item For every (possibly empty) dead simplex $\sigma\in L^\dagger$ the living link $\Lk_{L^\ast}(\sigma)\coloneqq L^\ast\cap\Lk_L(\sigma)$ is $(n-\dim(\sigma)-1)$-acyclic, resp. $L^\ast$ is, additionally, $n$-connected.
\end{enumerate}
\end{thm}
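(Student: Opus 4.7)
The equivalence (1)$\Leftrightarrow$(3) is the Bieri--Renz characterisation of the $\Sigma$-invariants by the finiteness properties of character kernels, which is recorded in the introduction. So the substantive content is (2)$\Leftrightarrow$(3), for which I would run Bestvina--Brady Morse theory on the universal cover $\widetilde X_L$ of the Salvetti complex.

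\textbf{Morse function and filtration.} First, extend $\phi$ to an $A_L$-equivariant map $h\colon \widetilde X_L \to \RR$ by affine interpolation on each cube: on an edge labelled by $v \in V(L)$, $h$ changes linearly by $\pm\phi(v)$. Because $A_L$ acts freely and cocompactly and translates $h$ by the corresponding element of $\RR$, the kernel $\ker(\phi)$ acts cocompactly on each slab $h^{-1}([-r,r])$. Filter $\widetilde X_L$ by these slabs. Since $\widetilde X_L$ is contractible and the $\ker(\phi)$-action is free with trivial (hence $\mathsf{FP}_\infty$) stabilisers, $\widetilde X_L$ is $n$-good for $\ker(\phi)$, and the slabs form a filtration of finite $n$-type. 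Brown's criteria (\Cref{browns.crit.classic.fp}, \Cref{browns.crit.classic.f}) then reduce $\ker(\phi)$ being of type $\mathsf{FP}_{n+1}$ (resp.\ $\mathsf{F}_{n+1}$) to the slab filtration being $\widetilde H_k$-essentially trivial (resp.\ $\pi_k$-essentially trivial) for all $k \leq n$.

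\textbf{Descending/ascending links.} Bestvina--Brady's discrete Morse lemma for cube complexes expresses the inclusion of one slab into a larger one as, up to homotopy, the attaching of cones on ascending and descending links at the $\ker(\phi)$-orbits of vertices newly included in the slab. Thus the essential triviality above translates into connectivity statements about these links. At each vertex $x \in \widetilde X_L$, the link in $\widetilde X_L$ is the flag complex on $\{(v,\pm) : v \in V(L)\}$ with $(v,\epsilon_1) \sim (w,\epsilon_2)$ whenever $\{v,w\}$ is an edge of $L$ (and $v \neq w$), and the descending link is the full subcomplex on directions $(v,\epsilon)$ with $\epsilon\phi(v) < 0$. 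Horizontal cells, arising from the dead subcomplex $L^\dagger$, require care: the descending link decomposes, up to homotopy, as an assembly of pieces of the form $\sigma * \Lk^{\downarrow}_{L^\ast}(\sigma)$ indexed by simplices $\sigma \in L^\dagger$, where $\Lk^{\downarrow}_{L^\ast}(\sigma)$ is the full subcomplex of $\Lk_L(\sigma) \cap L^\ast$ on vertices with negative $\phi$-value. A standard join/suspension computation then shows that all these pieces are $(n-1)$-acyclic (resp.\ $(n-1)$-connected) precisely when $\Lk_{L^\ast}(\sigma)$ is $(n - \dim(\sigma) - 1)$-acyclic (resp.\ appropriately connected), with the symmetric statement for $-\phi$ covering ascending links. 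This gives exactly condition (2).

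\textbf{Main obstacle.} The delicate step is the handling of horizontal cells: $\phi$ is generically non-generic on $L^\dagger$, so off-the-shelf Morse theory on $\widetilde X_L$ does not apply. The resolution --- originally due to Bestvina--Brady in the uniform character case, and extended to arbitrary characters by Meier--Meinert--VanWyk and Bux--Gonzalez --- is to stratify by dead simplices $\sigma \in L^\dagger$, package each horizontal contribution as a join $\sigma * \Lk^{\downarrow}_{L^\ast}(\sigma)$, and thereby confine the genuine Morse-theoretic bookkeeping to the living subcomplex $L^\ast$, where $\phi$ is generic. Combining this decomposition with Brown's criteria closes the equivalence (2)$\Leftrightarrow$(3) and finishes the proof.
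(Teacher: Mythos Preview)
The paper does not actually prove this theorem: it is stated without proof and attributed to the cited references \cite{BestvinaBrady1997,MeierMeinertVanWyk1998,BuxGonzalez1999}, reproduced ``for the convenience of the reader.'' So there is no proof in the paper to compare against.

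That said, your sketch is a reasonable outline of the argument in those references, particularly Bux--Gonzalez. Two small points are worth flagging. First, the equivalence (1)$\Leftrightarrow$(3) is \emph{not} immediate from Bieri--Renz alone: what the introduction records is that $\ker(\phi)$ is of type $\mathsf{FP}_{n+1}$ if and only if \emph{both} $\phi$ and $-\phi$ lie in $\Sigma^{n+1}(A_L;\ZZ)$. To get the one-sided statement in (1) you need the additional observation that for RAAGs condition (2) depends only on the support of $\phi$ (i.e.\ on $L^\ast$ and $L^\dagger$, not on the signs $\phi(v)$), so the $\Sigma$-invariants of $A_L$ are symmetric under $\phi\mapsto -\phi$; equivalently, the inversion automorphism $v\mapsto v^{-1}$ of $A_L$ swaps $\phi$ and $-\phi$. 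Second, your description of the descending link as ``an assembly of pieces of the form $\sigma * \Lk^{\downarrow}_{L^\ast}(\sigma)$'' is where all the real work lies, and it is more than a ``standard join/suspension computation'': making this precise (and getting the acyclicity degrees to match up with $n-\dim(\sigma)-1$) is exactly the technical core of Bux--Gonzalez. Your outline correctly identifies this as the main obstacle but does not actually carry it out.
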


Applying this we obtain the result promised by the title.

\begin{thmx}\label{thmx.C}
    There exists an irreducible lattice fibring over the circle.
\end{thmx}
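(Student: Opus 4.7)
The plan is to apply the construction from \Cref{extending actions} to produce an irreducible uniform lattice in $\PSL_2(\RR) \times S_L$ for a well-chosen flag complex $L$, and then to use the Bestvina--Brady character on $A_L$ together with \Cref{thm.fibring.construction} to exhibit a character whose kernel is of type $\mathsf{F}$.

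First, I would choose $L$ to be a finite contractible flag complex on $m \geq 3$ vertices that contains two non-adjacent vertices and is not a non-trivial join. The concrete choice of the path graph on four vertices $v_1 - v_2 - v_3 - v_4$ works: it is a tree (hence contractible), contains the non-adjacent pair $v_1, v_3$, and is not a non-trivial join, since any bipartition of the four vertices fails to realise enough edges. Thus the hypotheses of \Cref{prop.4val-Salv} are met.

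Second, I would take $H = \PSL_2(\RR)$ and begin with an irreducible uniform lattice $\Gamma \leqslant \PSL_2(\RR) \times \PSL_2(\QQ_2)$, for instance arising from the $\ZZ[\tfrac{1}{2}]$-points of $\SO(2,1;Q)$ for a suitable anisotropic rational quadratic form $Q$, as recalled at the beginning of \Cref{extending actions}. After passing to a torsion-free finite-index subgroup (which remains irreducible and uniform by residual finiteness of arithmetic groups), we may assume $\Gamma$ is torsion-free. Via the action of $\PSL_2(\QQ_2)$ on its Bruhat--Tits tree $\calt_3$, this becomes an irreducible uniform lattice in $\PSL_2(\RR) \times T_3$. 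Successively applying \Cref{prop.3val-4val} and \Cref{prop.4val-Salv} yields an irreducible uniform lattice $\Gamma_L \leqslant \PSL_2(\RR) \times S_L$. Torsion-freeness is preserved at each stage: both the group $F_\infty$ appearing in the short exact sequence of \Cref{prop.3val-4val} and the kernel $\ker(\pi) \leqslant A_L$ appearing in \eqref{ses.4val-Salv} are torsion-free (the latter because RAAGs are torsion-free), and passing to the orientation-preserving index-two subgroup in \Cref{sec.irr.fibring} preserves torsion-freeness as well.

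Finally, let $\phi \colon A_L \to \ZZ$ be the Bestvina--Brady character sending each standard generator of $A_L$ to $1$, and let $\psi \in H^1(\Gamma_L; \RR)$ be the character associated to $\phi$ as in \Cref{sec.irr.fibring}. Since $L$ is contractible, \Cref{thm.BBL.crit} (applied with $L^\dagger$ empty and $L^\ast = L$) gives that $\ker(\phi)$ is of type $\mathsf{F}$. Then \Cref{thm.fibring.construction}, using that $\Gamma_L$ is torsion-free, gives that $\ker(\psi)$ is also of type $\mathsf{F}$, so $\Gamma_L$ fibres over the circle. The main difficulty is ensuring torsion-freeness of $\Gamma_L$ throughout, so that one can invoke the ``type $\mathsf{F}$'' half of \Cref{thm.fibring.construction} rather than merely the ``type $\mathsf{FP}$'' half; apart from this bookkeeping, the argument is a direct assembly of the machinery established in \Cref{extending actions} and \Cref{sec.irr.fibring}.
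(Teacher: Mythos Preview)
Your proposal is correct and follows essentially the same approach as the paper: start from an irreducible uniform lattice in $\PSL_2(\RR)\times\PSL_2(\QQ_2)$, feed it through \Cref{prop.3val-4val} and \Cref{prop.4val-Salv}, and then transfer the Bestvina--Brady character via \Cref{thm.fibring.construction} and \Cref{thm.BBL.crit}. The only cosmetic differences are your concrete choice of $L$ (the path $P_4$ rather than a flag $2$-disc) and that you arrange torsion-freeness at the outset rather than passing to a torsion-free finite-index subgroup of $\Gamma_L$ at the end; both variants work and yield the same conclusion.
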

\begin{proof}
    Pick a torsion-free irreducible uniform lattice $\Gamma$ in $\PSL_2(\RR)\times\PSL_2(\QQ_2)$ and let $L$ be the following flag complex (the triangles are filled in with $2$-cells):
    \begin{center}
    \begin{tikzpicture}
        \node[circle,fill=black,inner sep=0pt,minimum size=5pt] at (0,0) (P1) {};
        \node[circle,fill=black,inner sep=0pt,minimum size=5pt] at (1,0) (P2) {};
        \node[circle,fill=black,inner sep=0pt,minimum size=5pt] at (0,1) (P4) [label=left:{$a$}] {};
        \node[circle,fill=black,inner sep=0pt,minimum size=5pt] at (1,1) (P3) {};
        \node[circle,fill=black,inner sep=0pt,minimum size=5pt] at (2,0) (Q1) {};
        \node[circle,fill=black,inner sep=0pt,minimum size=5pt] at (3,0) (Q2) [label=right:{$b$}] {};
        \node[circle,fill=black,inner sep=0pt,minimum size=5pt] at (2,1) (Q4)  {};
        \node[circle,fill=black,inner sep=0pt,minimum size=5pt] at (3,1) (Q3) {};
        \draw (P1) -- (P2); 
        \draw (P2) -- (P3); 
        \path (P3) edge node[midway, above] {} (P4) ; 
        \path (P4) edge node[midway, left] {} (P1); 
        \draw (P1) -- (P3);
        \draw (P2) -- (Q1); \draw (Q1) -- (Q4); \draw (Q4) -- (P3); \draw (P2) -- (Q4);
        \path (Q1) edge node[midway, below] {}  (Q2); 
        \path (Q2) edge node[midway, right] {}  (Q3); 
        \draw (Q3) -- (Q4); 
        \draw (Q4) -- (Q1); 
        \draw (Q1) -- (Q3);
    \end{tikzpicture}
    \end{center}
    Let $\phi\colon A_L\onto\Z$ be defined by mapping each standard generator to $1$ except for $a$ and $b$ which map to $0$. 
    Applying \Cref{prop.3val-4val}, \Cref{prop.4val-Salv}, and \Cref{thm.fibring.construction} sequentially we obtain an irreducible uniform lattice $\Gamma_L<\PSL_2(\RR)\times S_L$ and a character $\psi\colon\Gamma_L\to\Z$.
     It is easy to see that $L^\ast$ is the full subcomplex of $L$ spanned by unlabelled vertices and is clearly contractible.  Moreover, both $L^\ast\cap\Lk_L(a)$ and $L^\ast\cap\Lk_L(b)$ are both equal to an edge,  hence, they are contractible as well. 
    By \Cref{thm.BBL.crit}, $\ker(\phi)$ is of type $\mathsf{F}$ (one needs to run the same argument as in \Cref{thm.fibring.construction}).  Thus, as $\Gamma_L$ is torsion-free (\Cref{lem.vtf1} and \Cref{lem.vtf2}), we have by \Cref{thm.fibring.construction} that $\ker(\psi)$ is also of type $\mathsf{F}$.
\end{proof}


\bibliographystyle{halpha}
\bibliography{refs.bib}

\end{document}